\tikzset{>={Latex[width=1.2mm,length=1.7mm]}}
\newtheorem{thm}{Theorem}[section]
\newtheorem{prop}[thm]{Proposition}
\newtheorem{cor}[thm]{Corollary}
\newtheorem{lem}[thm]{Lemma}
\newtheorem{conj}[thm]{Conjecture}
\newtheorem{prob}[thm]{Problem}
\theoremstyle{definition}
\newtheorem{alg}[thm]{Algorithm}
\newcommand{\sn}{\mathfrak{S}_n}
\newcommand{\sr}{\mathfrak{S}_r}
\newcommand{\mfs}[1]{\mathfrak{S}_{#1}}
\newcommand{\zx}{\mathbb{Z}[x]}
\newcommand{\zxn}{\mathbb{Z}[x_{1,1},x_{1,2},\dotsc,x_{n,n}]}
\newcommand{\tp}[1]{\mathcal M^{\textsc{TP}}_{#1}}
\newcommand{\tnn}[1]{\mathcal M^{\textsc{TNN}}_{#1}}
\newcommand{\hps}[1]{\mathcal M^{\textsc{HPS}}_{#1}}
\newcommand{\sumsb}[1]{\sum_{\substack{#1}}}  %%%for multi-line sums
\newcommand{\defeq}{:=}
\newcommand{\spn}{\mathrm{span}}
\newcommand{\ntnsp}{\negthinspace}
\newcommand{\ntksp}{\negthickspace}
\newcommand{\bp}{\begin{prob}}
\newcommand{\ep}{\end{prob}}
\newcommand{\mat}[1]{\mathrm{Mat}_{#1 \times #1}}
\newcommand{\perm}{\mathrm{per}}
\newcommand{\permmon}[2]{#1_{1,#2_1} \ntnsp\cdots #1_{n,#2_n}}
\newcommand{\ssm}{\smallsetminus}
\newcommand{\multiu}{\Cup}
\begin{document}
%\author{Adam Clearwater}
%\author{Justin Lambright}%\corref{cor}}   needed for elsarticle
%\ead{jjlambright@anderson.edu}
\author{Mark Skandera and Daniel Soskin}
%\ead{mas906@lehigh.edu,mark.skandera@gmail.com}
\title{Permanental inequalities for totally positive matrices}
%nonnegativity and monomials in matrix entries}
%A multigrading of $\mathbb Z[x_{1,1},x_{1,2}, \dotsc, x_{n,n}]
%  and the total nonnegativity order}
%at Kazhdan--Lusztig
% basis elements}
%\title[Induced sign characters]
%{Induced sign characters and wiring diagram
%bases of the Hecke algebra}
%{A generalization of Kazhdan and Lusztig's R-polynomials}
%\cortext[cor]{Corresponding author}      needed for elsarticle
%\address{Department of Mathematics, Anderson University, ??? ,Anderson, IN 4601?, United States.}
%\address{Department of Mathematics, Lehigh University, 14 East Packer Avenue, Christmas-Saucon Hall, Bethlehem, PA 18015, United States.}
%\begin{keyword}
%Kazhdan--Lusztig \sep Hecke algebra \sep immanant \sep symmetric group
%\end{keyword}       needed for elsarticle??

\bibliographystyle{dart}

\date{\today}

\begin{abstract}
We characterize ratios of permanents of (generalized) submatrices which are bounded 
%as real-valued functions 
on the set of all totally positive matrices.  This provides a permanental
analog of results of Fallat, Gekhtman, and Johnson
%et.\ al. 
[{\em Adv.\ Appl.\ Math.} {\bf 30} no.\ 3, (2003) pp.\ 442--470]
concerning ratios of matrix minors.
We also extend work of Drake, Gerrish, and the first author [{\em Electron.\ J.\ Combin.,} {\bf 11} no.\ 1, (2004) Note 6] by characterizing the differences of monomials in $\zxn$ which evaluate positively on the set of all totally positive $n \times n$ matrices.
\end{abstract}

\maketitle
%\tableofcontents

\section{Introduction}\label{s:intro}

%\begin{enumerate}
%\item ( Define totally nonnegative matrices)
Given an
%real 
$n\times n$ matrix $A = (a_{i,j})$
and subsets $I, J \subseteq [n] \defeq \{1,\dotsc, n\}$,
let $A_{I,J} = (a_{i,j})_{i \in I, j \in J}$ denote the $(I,J)$-{\em submatrix} of $A$.
For $|I|= |J|$, call $\det(A_{I,J})$ the $(I,J)$-{\em minor} of $A$.
A real $n\times n$ matrix $A$ is called \emph{totally positive} ({\em totally nonnegative}) if every minor of $A$ is positive (nonnegative). Let $\tp n \subset \tnn n$ denote these sets of matrices.
%these
% Initially, total positivity arose in three different areas. It was studied by Gantmacher and Krein in small oscillations of vibrating systems (see \cite{GKOsz},\cite{GKOsc}), by Sch\"onberg in applications to analysis of real roots of polynomials and spline functions (see \cite{cur}), and by Karlin in integral equations and statistics (see \cite{ando87}, \cite{gas2013total}). 
%Totally positive 

These and the set $\hps n$ of $n \times n$ Hermitian positive semidefinite matrices
%Such matrices 
arise in many areas of mathematics,
%. (See, e.g., \cite{FJTNNMatrices}).
%$\quad$ 
% For $I,I' \subseteq \{1,2,\ldots,n\}$ with
% $|I|=|I'|$, we denote the permanent of a submatrix of  $A$ with row set $I$ and column
% set $I'$ as $\text{per}(A_{I,I'})$.
       % \item Mention long history of studying inequalities in these
    % matrices and in HPSD matrices, especially concerning products of determinants and permanents of submatrices.  Include a few of the examples from our last introduction?
and for more than a century mathematicians have been studying inequalities satisfied by their matrix entries. 
(See, e.g., \cite{FJTNNMatrices}.)
%of totally nonnegative and Hermitian positive semidefinite matrices.  (See, e.g., \cite{FJTNNMatrices}.) 
Many such inequalities involve minors and permanents.
%\begin{equation*}
%    \perm(A) \defeq \sum_{w \in \sn} \permmon aw.
%\end{equation*}
For instance
inequalities of Fischer~\cite{fischer1908}, 
Fan~\cite{carlson1967},
and Lieb~\cite{LiebPerm}
state that 
 for all matrices $A \in \tnn n \cup \hps n$, 
 %the above classes 
 %of matrices 
 and for all $I \subseteq [n]$
 %$ \defeq \{ 1,\dotsc, n\}$ 
 and $I^c \defeq [n] \ssm I$, we have
    \begin{equation}\label{eq:fischer}
    \begin{gathered}
    \det(A)\leq \det(A_{I,I}) \det(A_{I^c,I^c}),\\
    \perm(A) \geq \perm(A_{I,I})\; \perm(A_{I^c,I^c}).
    \end{gathered}
    \end{equation}
%     and Ky Fan showed that this holds for $A$ TNN as well (unpublished; 
%(see \cite{carlson1967}).
%%Lieb~\cite{LiebPerm} proved a permanental analog
    % \begin{equation}\label{eq:lieb}
    % \perm(A)\geq \perm(A_{I,I}) \;\perm(A_{I^c,I^c}),
    % \end{equation}
    % for $A$ HPSD, and this analog holds for $A$ TNN because every term in the expansion of the product on the right-hand side of (\ref{eq:lieb}) also appears on the left-hand side.
Koteljanskii's inequality~\cite{KotelProp}, \cite{KotelPropRuss} states that for $A \in \tnn n \cup \hps n$ and for all $I, J \subseteq [n]$ %with $|I| = |J|$ 
we have
\begin{equation}\label{eq:koteljanskii}
    \det(A_{I\cup J, I\cup J})\det(A_{I\cap J, I\cap J})\leq \det(A_{I,I}) \det(A_{J,J}).
    \end{equation}
%    for $A$ belonging to a class of matrices including HPSD and TNN matrices ~\cite{KotelProp}, \cite{KotelPropRuss}.  

Many open questions about inequalities exist and seem difficult. For instance, it is known which $8$-tuples 
$(I,J,K,L,I',J',K',L')$ of subsets satisfy
\begin{equation}
    \det(A_{I,I'})\det(A_{J,J'}) \leq \det(A_{K,K'})\det(A_{L,L'}) 
\end{equation}
for all 
%totally nonnegative 
$A \in \tnn n$ \cite{FGJMult}, \cite{SkanIneq}, but few permanental analogs of such inequalities are known.  While some of these $8$-tuples also satisfiy
%but it is not known which of these $8$-tuples satisfy 
\begin{equation}
    \perm(A_{I,I'})\;\perm(A_{J,J'}) \geq \perm(A_{K,K'})\;\perm(A_{L,L'}),
\end{equation}    
    this second inequality is not true in general.
    For example, the 
%Optionally, assume that the submatrices are principal and complementary within products: 
%$\perm(A_{I,I}) \perm(A_{I^c,I^c}) \leq \perm( A_{K,K}) \perm(A_{K^c,K^c})$, etc.
%$\det(A_{I,I}) \det(A_{I^c,I^c}) \leq \det( A_{K,K}) \det(A_{K^c,K^c})$.
 natural permanental analog
 %Koteljanskii's inequality 
\begin{equation}\label{eq:kotelperm}
\perm(A_{I\cup J, I\cup J})\;\perm(A_{I\cap J, I\cap J})\geq \perm(A_{I,I})\; \perm(A_{J,J}),
\end{equation}
%for all $I, J \subseteq [n]$.
%Somewhat surprisingly, this inequality 
of (\ref{eq:koteljanskii}) 
holds neither for all
$A \in \hps n$ nor for all $A \in \tnn n$.
%Hermitian positive semidefinite matrices nor for totally nonnegative matrices. 
(See \cite[\S 6]{SkanSoskinBJArx} for a counterexample with $n = 3$.)

%We will not consider inequalities for elements of $\hps n$ any further,
%but 
Let us put aside $\hps n$ and consider 
%the study of 
conjectured inequalities of the form
%One strategy for studying a conjectured inequality
%of the above forms, i.e., 
\begin{equation}\label{eq:productineq}
\mathrm{product}_1 \leq \mathrm{product}_2
\end{equation}
%for 
%satisfied by 
%products of
involving minors and permanents of matrices in $\tnn n$
and $\tp n$.
%such as (\ref{eq:fischer}) -- (\ref{eq:kotelperm})
%those above
%for $\tnn n$
One strategy for studying (\ref{eq:productineq}) is to 
%subtract 
%one product from the other 
%and 
view the difference $\mathrm{product}_2 - \mathrm{product}_1$ as a polynomial 
\begin{equation}\label{eq:poly}
    f(x) \defeq f(x_{1,1}, x_{1,2}, \dotsc, x_{n,n}) \in \zx \defeq \zxn
\end{equation}
in matrix entries.  Then the validity of the inequality (\ref{eq:productineq}) is equivalent to the statement that for all $A = (a_{i,j}) \in \tnn n$, we have
\begin{equation}\label{eq:tnnpoly}
    f(A) \defeq f(a_{1,1}, a_{1,2}, \dotsc, a_{n,n}) \geq 0.
\end{equation}
We call a polynomial (\ref{eq:poly}) with this property
a {\em totally nonnegative polynomial}.
Since $\tp n$ is dense in $\tnn n$, the inequality (\ref{eq:tnnpoly}) holds for all $A \in \tp n$ if and only if it holds for all $A \in \tnn n$.

A second strategy 
%for studying inequalities such as 
for studying (variations of) a potential inequality (\ref{eq:productineq})
%such as (\ref{eq:fischer}) -- (\ref{eq:kotelperm})
%those above
%for $\tnn n$
%those above 
is to ask for which positive constants $k_1$, $k_2$
the modified inequalities
\begin{equation}\label{eq:modineq}
k_1 \cdot \mathrm{product}_1 \leq \mathrm{product}_2 
%\qquad
%\mathrm{product}_2 
\leq k_2 \cdot \mathrm{product}_1
\end{equation}
%multiply one side by an unknown
%positive constant 
%$k$ and to ask for which $k$ the modified inequality 
hold for all $A \in \tnn n$.  
Bounds of $k_1 = 1$ or $k_2 = 1$ imply
the inequality (\ref{eq:productineq}) or its reverse to hold; %respectively; 
other bounds give information not apparent in the proof or disproof of (\ref{eq:productineq}).
Equivalently, we may view the ratio of $\mathrm{product}_2$ to $\mathrm{product}_1$
%of the two sides of 
%(\ref{eq:productineq})
%the inequality 
as a rational function
\begin{equation}\label{eq:origratio}
    R(x) \defeq R(x_{1,1}, x_{1,2}, \dotsc, x_{n,n}) \in \mathbb Q(x) \defeq \mathbb Q(x_{1,1}, x_{1,2}, \dotsc, x_{n,n})
\end{equation}
in matrix entries, and we may ask for upper and lower bounds
as $x$ varies over $\tp n$.
While a ratio (\ref{eq:origratio})  
%Equivalently, we may state upper and lower bounds for the ratio of the two sides of the inequality, considered as a real valued function $R(x)$ on the set of totally nonnegative matrices $x=(x_{ij})$.
%It is also clear that some such ratios 
is not defined everywhere on $\tnn n$, 
%since matrix entries and other minors may be $0$,
%Again, 
the density of $\tp n$ in $\tnn n$ allows us to
%on the set of totally nonnegative matrices, whose entries and other minors may be zero.  We will therefore 
restrict our attention to $\tp n$:
%the set $\tp n$ of $n \times n$ totally positive matrices.  Since the set $\tp n$ is dense in the set $\tnn n$ of $n \times n$ totally nonnegative matrices, bounds for the above ratios apply also to those $x \in \tnn n$ 
%totally nonnegative matrices 
%for which they are defined,
we have
\begin{equation}\label{eq:bounds}
k_1 \leq R(x) \leq k_2
\end{equation}
for all $x \in \tp n$ if and only if
the same inequalities hold for all 
$x \in \tnn n$ such that $R(x)$ is defined.
% \begin{equation}\label{eq:bounds}
%      \begin{aligned}
%          %0 < 
%          k_1 &\leq R(x) \leq k_2 \quad \text{for all $x \in \tp n$} \qquad \qquad \Longleftrightarrow\\
% %         &\text{if and only if} \\
%          %0 < 
%          k_1 &\leq R(x) \leq k_2 \quad \text{for all $x \in \tnn n$ such that $R(x)$ is defined.}
%      \end{aligned}
%  \end{equation}
Clearly the lower bound $k_1$ is interesting
%for $R(x)$ are interesting 
%only if nontrivial, i.e., positive, since 
%Clearly 
only when positive,
since products of minors and permanents of totally nonnegative matrices are trivially bounded below by $0$.
%Furthermore, bounds of $k_1 = 1$ or $k_2 = 1$ imply
%the inequality (\ref{eq:productineq}) or its reverse to hold, %respectively.

%is always bounded below by zero.

%It is also clear that 

% \begin{equation}\label{eq:bounds}
%     \begin{gathered}
%         0 < k_1 \leq R(x) \leq k_2 \quad \text{for all $x \in \tp n$}\\
%  \Longleftrightarrow \\
%        \text{if and only if} \\
%         0 < k_1 \leq R(x) \leq k_2 \quad \text{for all $x \in \tnn n$ such that $R(x)$ is defined.}
%     \end{gathered}
% \end{equation}
%We will not consider Hermitian positive semidefinite matrices.

%We proceed to consider inequalities from the point of view of (\ref{eq:origratio}) and the second strategy above.
%in order to include pairs of products of minors and permanents which appear incomparable
%from the point of view of the first strategy.
A characterization of all ratios of the form
\begin{equation}\label{eq:2over2}
\frac{\det(x_{I,I'})\det(x_{J,J'})}
{\det(x_{K,K'})\det(x_{L,L'})}, \qquad I, I',\dotsc, L, L' \subset [n],
\end{equation}
which are bounded above and/or nontrivially bounded below on $\tp n$
follows from work in
\cite{FGJMult} and \cite{SkanIneq}.
%were characterized.  
Each ratio (\ref{eq:2over2}) is bounded above and/or below by $1$, and for each $n$, 
%Specifically, for each $n$,
%each bounded ratio (\ref{eq:2over2}) factors 
%these 
factors as a product of elements of a finite set of indecomposable ratios.  This result was extended in \cite{GSBoundedRatios} to include ratios of products of arbitrarily many minors
\begin{equation}\label{eq:manyovermany}
\frac{\det(x_{I_1,I'_1}) \cdots \det(x_{I_p,I'_p})}
{\det(x_{J_1,J'_1})\cdots\det(x_{J_p,J'_p})}.
\end{equation}
Again, each of these factors
%Again, for each $n$, each bounded ratio (\ref{eq:manyovermany}) factors 
as a product of elements belonging to a finite set of indecomposable ratios.  For $n = 3$, each ratio (\ref{eq:manyovermany}) is bounded above and/or below by $1$;
%the upper and/or lower bounds are known to be $1$; 
for $n \geq 4$, such bounds are conjectured~\cite{BoochGen}.

%In all cases, the ratios (\ref{eq:2over2}) are bounded by $1$.

%Fallat--Gekhtman--Johnson used this point of view in \cite{} and found

 %   $\star$ Mention that Charlie Johnson and others studied polynomial inequalities from the rational function point of view.  Mention implications between the two points of view.  
 While the permanental version (\ref{eq:kotelperm}) of Koteljanskii's inequality is false, we will show in Section~\ref{s:main} that the corresponding ratio is bounded above and nontrivially below.
 Specifically,
%There is a 
%From the results in this paper we show that 
\begin{equation}\label{eq:kotelratio}
\frac1{|I \cup J|! ~| I \cap J|!}
\leq
\frac{\perm(x_{I,I})\;\perm(x_{J,J})}
{\perm(x_{I\cup J, I\cup J})\;\perm(x_{I\cap J, I\cap J})}
\leq
|I|!~|J|!
\end{equation}
%\geq \frac{\perm(A_{I,I})\; \perm(A_{J,J})}{|I|!~|J|!}
%\end{equation}
for all $I, J \subseteq [n]$ and $x \in \tp n$.
The 
%combination of (\ref{eq:kotelratio}) and the 
failure of (\ref{eq:kotelperm}), combined with (\ref{eq:kotelratio}),
%shows that unlike the bounded ratios of minors
exposes a difference between ratios of minors and of permanents:
%implies
%it is clear 
unlike the bounded ratios in (\ref{eq:2over2}),
%that 
{\em not} all bounded ratios of permanents are bounded by $1$.  Thus it is natural to ask
which ratios 
\begin{equation}\label{eq:R}
R(x)=\frac{\text{per}(x_{I_1,I'_1})\text{per}(x_{I_2,I'_2})\cdots\text{per}(x_{I_r,I'_r})}{\text{per}(x_{J_1,J'_1})\text{per}(x_{J_2,J'_2})\cdots\text{per}(x_{J_q,J'_q})}
\end{equation} 
%of permanents 
are bounded above and/or nontrivially below as real-valued functions on $\tp n$,
and to state bounds.
 %in conditions on  $R$ to be bounded 
% as a real-valued function on the locus of totally positive elements in $GL_{n}$, and to state bounds where possible. 

In Section~\ref{s:multitnnorder} we describe
%look closely at
%monomials in 
a multigrading of 
the coordinate ring $\zx$ of $n \times n$ matrices.  Extending work in \cite{DGSBruhat}, we define a partial order on the monomials in 
%each multigraded component of 
$\zx$ which characterizes the differences 
$\smash{\prod x_{i,j}^{c_{i,j}} - \prod x_{i,j}^{d_{i,j}}}$ 
%of two monomials 
which are totally nonnegative polynomials.
%facilitates the 
%study of total nonnegativity.
%define a partial order on these
%which aids in the study of total nonnegativity and which %extends a partial order  
%use total nonnegativity to  extend the partial order 
%defined in \cite{DGSBruhat}.
%to all monomials in this ring.  
This leads to our main results in Section~\ref{s:main} which characterize ratios (\ref{eq:R}) which are bounded above and nontrivially below as real-valued functions on $\tp n$.  We provide some such bounds, which are not necessarily tight.
%the locus of totally positive $n \times n$ matrices.
We finish in Section~\ref{s:open} with some open questions.
\section{A multigrading of $\zxn$ and the total nonnegativity order}\label{s:multitnnorder}

% Many inequalities satisfied by the entries of totally nonnegative matrices
% can be expressed in terms of polynomials in the ring $\zx \defeq
% \zxn$, where we view $x = (x_{i,j})$ as a matrix of indeterminates.
% In particular, for $f(x) \in \zx$ and $A = (a_{i,j})$ an $n \times n$ matrix,
% we define
% we might express an inequality
% in the form
% \begin{equation}\label{eq:tnnpolydef}
%   f(A) \defeq f(a_{1,1}, a_{1,2}, \dotsc, a_{n,n}) \geq 0
%   \text{ for all totally nonnegative matrices } A.
% \end{equation}
% In this case, we call $f(x)$ a {\em totally nonnegative polynomial}.
% ($\star$ Move this definition to the introduction?)

We will find it convenient to view degree-$r$ monomials in $\zx$
in terms of permutations in the symmetric group $\sr$ and multisets of
$[n]$.
%$ \defeq \{1, \dotsc, n\}$.
In particular, given permutations $v, w \in \sr$ define
the monomial
\begin{equation*}
    x^{v,w} \defeq x_{v_1,w_1} \cdots x_{v_r,w_r}.
\end{equation*}
Define an {\em $r$-element multiset} of $[n]$ to be a nondecreasing
$r$-tuple of elements of $[n]$.
In exponential notation, we write $i^{k}$ to represent $k$ consecutive
occurrences of $i$ in such an $r$-tuple, e.g.,
\begin{equation}\label{eq:multisetex}
    (1,1,2,3) = 1^22^13^1, \qquad (1,2,2,2) = 1^12^3.
\end{equation}
Two $r$-element multisets
\begin{equation}\label{eq:MO}
%    \begin{aligned}
%        M &= (m_1,\dotsc,m_r) = 1^{\alpha_1} \cdots n^{\alpha_n},\\
%        O &= (o_1,\dotsc,o_r) = 1^{\beta_1} \cdots n^{\beta_n},
%    \end{aligned}
M = (m_1,\dotsc,m_r) = 1^{\alpha_1} \cdots n^{\alpha_n}, \qquad
O = (o_1,\dotsc,o_r) = 1^{\beta_1} \cdots n^{\beta_n},
\end{equation}
determine a {\em generalized submatrix}
$x_{M,O}$ of $x$ by $(x_{M,O})_{i,j} \defeq x_{m_i,o_j}$.
For example, when $n=3$, we have the $4 \times 4$ generalized submatrix
and monomial
\begin{equation}\label{eq:gensubmatrixex}
    x_{1123,1222} = \begin{bmatrix}
        x_{1,1} & x_{1,2} & x_{1,2} & x_{1,2} \\
        x_{1,1} & x_{1,2} & x_{1,2} & x_{1,2} \\
        x_{2,1} & x_{2,2} & x_{2,2} & x_{2,2} \\
        x_{3,1} & x_{3,2} & x_{3,2} & x_{3,2}
    \end{bmatrix}\ntksp,
    \qquad
    (x_{1123,1222})^{1234,4312} = x_{1,2} x_{1,2} x_{2,1} x_{3,2}.
\end{equation}

The ring $\zx$ has a natural multigrading
\begin{equation}\label{eq:multigrading}
  \mathbb Z[x] = \bigoplus_{r \geq 0} \bigoplus_{M,O} \mathcal A_{M,O},
\end{equation}
where the second direct sum is over pairs $(M,O)$ of $r$-element
multisets of $[n]$, and
\begin{equation}\label{eq:permmon}
  \mathcal A_{M,O} \defeq
  \spn_{\mathbb Z}\{ (x_{M,O})^{e,w} \,|\, w \in \sr \}.
  \end{equation}
More precisely, for $M$, $O$ as in (\ref{eq:MO}),
a basis for $\mathcal A_{M,O}$ is given by all monomials
\begin{equation}\label{eq:multimon}
  \prod_{i,j \in [n]} \ntksp x_{i,j}^{c_{i,j}}
  \end{equation}
%x_{1,1}^{c_{1,1}} x_{1,2}^{c_{1,2}} \cdots x_{n,n}^{c_{n,n}}$
with $C = (c_{i,j}) \in \mathrm{Mat}_{n\times n}(\mathbb N)$ satisfying
\begin{equation}\label{eq:CtoMO}
%  \begin{gathered}
%    C = (c_{i,j}) \in \mat n(\mathbb N),\\
    c_{i,1} + \cdots + c_{i,n} = \alpha_i,
    %d_{i,1} + \cdots + d_{i,n}
    \quad
%    \text{for } i=1,\dotsc,n, \qquad
    c_{1,j} + \cdots + c_{n,j} = \beta_j
%    d_{1,j} + \cdots + d_{n,j}
    \quad \text{for } i,j=1,\dotsc,n.
%    \end{gathered}
%  \end{aligned}
  \end{equation}
%  where $\alpha_i$ and $\beta_j$ are defined as in (\ref{eq:MO}).
One may express a monomial (\ref{eq:multimon})
in the form $(x_{M,O})^{e,w}$
%(\ref{eq:permmon})
%$x_{M,O}^{e,w}$
by the following algorithm.
\begin{alg}\label{a:mattoperm}
  Given a monomial (\ref{eq:multimon})
  %of degree $r$
  in $\mathcal A_{M,O}$
  with $M$, $O$ as in (\ref{eq:MO}),
  \begin{enumerate}[(i)]
    \item Define the rearrangement $u = u_1 \cdots u_r$ of $O$ by
  writing (\ref{eq:multimon}) with variables in lexicographic order
  as $x_{m_1,u_1} \cdots x_{m_r,u_r}$.
\item Let $j_1 < \cdots < j_{\beta_1}$ be
  the positions of the $\beta_1$ ones in $u$,
  let $j_{\beta_1 + 1} < \cdots < j_{\beta_1 + \beta_2}$ be
  the positions of the $\beta_2$ twos in $u$, etc.
\item For $i = 1, \dotsc, r$, define $w_{j_i} = i$.
  %place letter $i$ in position $j_i$ of $w$.
\item Call the resulting word $w = w(C)$.
\end{enumerate}
\end{alg}
  %and exponent matrix $C$ (\ref{eq:CtoMO}).
%    $M = (m_1,\dotsc,m_r) = 1^{\alpha_1} \cdots n^{\alpha_n}$,
%     (1^22^13^1, 1^12^33^0)$ of $\{1,2,3\}$,
For example, it is easy to check that for $n=3$ and multisets
$(1123,1222) = (1^22^13^1, 1^12^33^0)$ of $\{1,2,3\}$,
the graded component $\mathcal A_{1123,1222}$
%= \mathcal A_{1^22^13^1,1^12^33^0}$
of $\mathbb Z[x_{1,1},x_{1,2},\dotsc,x_{3,3}]$
is spanned by monomials (\ref{eq:multimon}),
where $C = (c_{i,j})$ is one of the matrices
%the $3 \times 3$ matrices
\begin{equation}\label{eq:3x3matrices}
  \begin{bmatrix}
    1 & 1 & 0 \\ 0 & 1 & 0 \\ 0 & 1 & 0
  \end{bmatrix}\ntksp,
  \quad
  \begin{bmatrix}
    0 & 2 & 0 \\ 1 & 0 & 0 \\ 0 & 1 & 0
  \end{bmatrix}\ntksp,
  \quad
  \begin{bmatrix}
    0 & 2 & 0 \\ 0 & 1 & 0 \\ 1 & 0 & 0
  \end{bmatrix}
  %\ntksp.
\end{equation}
having row sums $(2,1,1)$ and column sums $(1,3,0)$.  These are
\begin{equation}\label{eq:3monomials}
%  \begin{gathered}
   x_{1,1}x_{1,2}x_{2,2}x_{3,2}, \quad
   x_{\smash{1,2}}^2x_{2,1}x_{3,2}, \quad
   \smash{x_{1,2}^2}x_{2,2}x_{3,1},
%  \end{gathered}
\end{equation}
with column index sequences equal to the rearrangements
$1222$, $2212$, $2221$ of $1222$.
Algorithm~\ref{a:mattoperm} then produces
permutations $1234$, $2314$, $2341$ in $\mfs 4$, and we may express
the monomials (\ref{eq:3monomials}) as
\begin{equation}\label{eq:3moremonomials}
%  \mathcal A_{1123,1222} = \spn
  %\{
  (x_{1123,1222})^{1234,1234},\quad
  (x_{1123,1222})^{1234,2314},\quad
  (x_{1123,1222})^{1234,2341}.
  %\}.
\end{equation}

%Applying Algorithm~\ref{a:mattoperm} to produce basis elements
%of the graded component $\mathcal A_{1123,1222}$ of
%$\mathbb Z[x_{1,1},x_{1,2},\dotsc,x_{3,3}]$,
%we obtain
%\begin{equation*}
%  \begin{gathered}
%   x_{1,1}x_{1,2}x_{2,2}x_{3,2} = (x_{1123,1222})^{1234,1234},\\
%  x_{\smash{1,2}}^2x_{2,1}x_{3,2} = (x_{1123,1222})^{1234,3124},\\
%  \smash{x_{1,2}^2}x_{2,2}x_{3,1} = (x_{1123,1222})^{1234,4123}.
%  \end{gathered}
%\end{equation*}
%($\star$ Fix the above.)

  %The total %nonnegativity poset on
For $r$-element multisets $M$, $O$ of $[n]$,
%$\{1,\dotsc,n\}$, 
the monomials
in $\mathcal A_{M,O}$ are closely
related to parabolic subgroups of $\mfs r$ with standard generators
$s_1, \dotsc, s_{r-1}$, and double cosets of the form
%in $\mfs r$.
%($\star$ Introduce generators of $\sr$ and parabolic subgroups $W_I$.)
%These are sets of the form
$W_{\iota(M)} w W_{\iota(O)}$
  %$I$, $J$ are subsets of generators of $\mfs k$.
  %isomorphic to the Bruhat order on double cosets
  %$W_{\iota(M)} \backslash W / W_{\iota(O)}$,
where $w$ belongs to
%\in 
$\mfs r$,
$W_J$ is the subgroup of $\mfs r$ generated by $J$, and
  \begin{equation}\label{eq:parabolics}
    \begin{gathered}
      \iota(M) \defeq \{ s_1, \dotsc, s_{r-1} \} \ssm
      \{s_{\alpha_1}, s_{\alpha_1 + \alpha_2}, \dotsc, s_{r-\alpha_n} \}
      = \{ s_j \,|\, m_j = m_{j+1} \},\\
      \iota(O) \defeq \{ s_1, \dotsc, s_{r-1} \} \ssm
      \{s_{\beta_1}, s_{\beta_1 + \beta_2}, \dotsc, s_{r-\beta_n} \}
      = \{ s_j \,|\, o_j = o_{j+1} \}.
    \end{gathered}
  \end{equation}
  %(or something like that - think about $n$ and $k$).
  %Specifically,
  It is easy to see that the map $M \mapsto \iota(M)$ is bijective:
  one recovers $M = 1^{\alpha_1} \cdots n^{\alpha_n}$ from the generators not in $\iota(M)$
  as in (\ref{eq:parabolics}).
  It is known that each double coset has unique minimal and maximal elements
  with respect to the {\em Bruhat order on $\mfs r$}, defined by declaring $v \leq w$ if each reduced expression $s_{i_1} \cdots s_{i_\ell}$ for $w$ contains a subword which is a reduced expression for $v$.  (See, e.g., \cite{BBCoxeter}, \cite{DougInv}.)
  %if $v = v_1 \cdots v_n$ is
  %the longest element of
%  given $W_{\iota(M)}$, then we can recover $M = m_1 \cdots m_r$
%  by $m_i = 1 + |\{}$.
    Let $W_{\iota(M)} \backslash W / W_{\iota(O)}$ denote
the set of all double cosets of $W = \sr$
determined by $r$-element multisets $M$, $O$.
%  It is known that the group $\mfs r$ is a disjoint union of such cosets.
%  We also have the following facts.
\begin{prop}\label{p:MOcosetcorr}
  Fix $r$-element multisets $M$, $O$ as in (\ref{eq:MO}).
    The double cosets
$W_{\iota(M)} \backslash W / W_{\iota(O)}$ satisfy the following.
%    For $r$-element multisets $M$, $O$ of $\{1,\dotsc,n\}$,
%    the subgroups and cosets defined in (\ref{eq:parabolics}) satisfy the
%    following.
\begin{enumerate}[(i)]
\item Each double coset
  has a unique Bruhat-minimal element $u$
  satisfying $su > u$ for all $s \in \iota(M)$ and $us > u$ for all
  $s \in \iota(O)$;
  it has a unique Bruhat-maximal element $u'$
  satisfying $su' < u'$ for all $s \in \iota(M)$ and $u's < u'$ for all
  $s \in \iota(O)$.
\item We have $W_{\iota(M)}vW_{\iota(O)} = W_{\iota(M)}wW_{\iota(O)}$
      if and only if $(x_{M,O})^{e,v} = (x_{M,O})^{e,w}$.
\item The cardinality $|W_{\iota(M)} \backslash W / W_{\iota(O)}|$
  %of these double cosets
  is the dimension of $\mathcal A_{M,O}$, equivalently, the number
  of matrices in $\mat n(\mathbb N)$ having row sums $(\alpha_1,\dotsc, \alpha_n)$
  and column sums $(\beta_1,\dotsc,\beta_n)$.
\item Each permutation $w$ produced by
Algorithm~\ref{a:mattoperm} is the unique Bruhat-minimal element of
its coset $W_{\iota(M)}wW_{\iota(O)}$.
\end{enumerate}
\end{prop}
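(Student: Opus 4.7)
The plan is to prove part (i) by appeal to the classical theory of parabolic double cosets in Coxeter groups, to prove (iv) by a direct descent-condition check on the algorithm's output, and then to derive (ii) and (iii) as formal consequences.

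For (i), one cites the standard structural result (see, e.g., \cite{BBCoxeter}, \cite{DougInv}) that in any Coxeter system with parabolic subgroups $W_I$ and $W_J$, each double coset $W_I w W_J$ has a unique Bruhat-minimal and a unique Bruhat-maximal element, characterized by the two-sided descent conditions; specialized to $\sr$ these read precisely as in the statement. For (iv), I would verify both descent conditions directly for $w = w(C)$. The lexicographic rewriting of the monomial in step~(i) of Algorithm~\ref{a:mattoperm} forces $u_j \leq u_{j+1}$ whenever $m_j = m_{j+1}$, and combined with the ordered assignment $w_{j_i} = i$ in step~(iii), this yields $w_j < w_{j+1}$ for every $s_j \in \iota(M)$. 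The second descent condition translates to the statement that whenever $o_k = o_{k+1}$, the value $k$ appears in $w$ at a position to the left of the value $k+1$; this is immediate from the algorithm, since $w^{-1}(k) = j_k$, $w^{-1}(k+1) = j_{k+1}$, and when $k, k+1$ lie in a common $O$-block the indices $j_k, j_{k+1}$ are listed in increasing order. These two conditions, combined with (i), identify $w(C)$ as the unique Bruhat-minimal element of its double coset.

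For (ii), the forward direction is a direct check that each generator of $W_{\iota(M)}$ or $W_{\iota(O)}$ acts on $w$ by a transposition preserving the multiset $\{(m_i,\, o_{w_i})\}_{i=1}^{r}$, and hence the monomial $(x_{M,O})^{e,w}$. For the reverse direction, encode each monomial as the content matrix $C(w) \in \mat n (\mathbb N)$ with $C(w)_{a,b} = \#\{i : m_i = a,\ o_{w_i} = b\}$; equal monomials correspond to equal content matrices, and running Algorithm~\ref{a:mattoperm} on a common monomial produces a single permutation, which by (iv) is the Bruhat-minimal element of both $W_{\iota(M)} v W_{\iota(O)}$ and $W_{\iota(M)} w W_{\iota(O)}$, so by uniqueness in (i) the two cosets coincide. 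Part (iii) is then immediate: the basis of $\mathcal A_{M,O}$ displayed in (\ref{eq:multimon})--(\ref{eq:CtoMO}) is indexed by matrices in $\mat n (\mathbb N)$ with row sums $(\alpha_1, \dotsc, \alpha_n)$ and column sums $(\beta_1, \dotsc, \beta_n)$, and (ii) puts these matrices in bijection with the double cosets.

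The main obstacle is the bookkeeping in part (iv), where one must translate carefully between the auxiliary $j$-indexing of positions used in the algorithm and the one-line descent conditions (on positions versus on values) that characterize the Bruhat-minimal element; once this translation is done, the remainder of the proposition follows formally from (i) and (iv).
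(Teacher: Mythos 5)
Your proof follows essentially the same route as the paper's: part (i) is quoted from the literature, part (iv) is a direct check of the two descent conditions coming from Steps (i)--(iii) of the algorithm, and parts (ii)--(iii) reduce to the correspondence between monomials in $\mathcal A_{M,O}$ and matrices of block counts. Your verification of (iv), including the position-versus-value bookkeeping, is correct and matches the paper's argument. The one step that does not quite follow as written is the reverse direction of (ii): part (iv) identifies $w(C)$ as the Bruhat-minimal element of \emph{its own} double coset, but to conclude that it is the minimal element of both $W_{\iota(M)}vW_{\iota(O)}$ and $W_{\iota(M)}wW_{\iota(O)}$ you must first show that $w(C)$ actually \emph{lies in} each of those cosets, which is not supplied by (iv). Closing this requires the (easy, but not automatic) observation that $v$ can be carried to the algorithm's canonical output by sorting within blocks: left multiplication by elements of $W_{\iota(M)}$ permutes positions within a row block and right multiplication by elements of $W_{\iota(O)}$ permutes values within a column block, neither of which changes the monomial. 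The paper makes the equivalent assertion directly --- that the set of permutations whose blocked permutation matrices have prescribed block counts is exactly one double coset --- so both arguments rest on the same sorting fact; you should state it explicitly rather than attribute it to (iv).
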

  \begin{proof}
    (i) See \cite{DougInv}.

    \noindent (ii) The dimension of $\mathcal A_{M,O}$ is the cardinality
    of
    %distinct monomials in
    the set
    $\{ (x_{M,O})^{e,w} \,|\, w \in \sr \}$.
    But we have $(x_{M,O})^{e,v} = (x_{M,O})^{e,w}$
    %are equal
    if and only if when we partition the $r \times r$
    permutation matrices $P(v)$, $P(w)$ of $v$, $w$ into blocks by drawing bars
    after rows $\alpha_1, \alpha_1+\alpha_2,\dotsc,r-\alpha_n$ and after columns
    $\beta_1, \beta_1+\beta_2, \dotsc, r-\beta_n$, the corresponding blocks of $P(v)$ and $P(w)$
    contain equal numbers of ones.
    It follows that for fixed $w \in \sr$, the set
    $\{ v \in \sr \,|\, (x_{M,O})^{e,v} = (x_{M,O})^{e,w} \}$
    is $W_{\iota(M)}wW_{\iota(O)}$.

    \noindent (iii)  This follows from (ii), where
    $c_{i,j}$ is the number of ones in block $(i,j)$ of the permutation
    matrix of any permutation belonging to the double coset.

    \noindent (iv) By Step (i) of the algorithm,
    subwords $w_{1} \cdots w_{\alpha_1}$, $w_{\alpha_1+1}\cdots w_{\alpha_1+\alpha_2}$,
    etc., of $w(C)$ are increasing.
    It follows that for any
    generator $s \in \iota(M)$ we have $sw > w$.
    By Step (ii) of the algorithm,
    letters $1,\dotsc,\beta_1$ appear in increasing order in $w(C)$,
    as do $\beta_1+1,\dotsc,\beta_1+\beta_2$, etc.
    It follows that for any
    generator $w \in \iota(O)$ we have $ws > w$.
  \end{proof}

%  Certain totally nonnegative polynomials belonging to $\mathcal A_{M,O}$ are
%  related to the Bruhat orders on $\mfs r$ and
%  its double cosets
%  $W_{\iota(M)} \backslash W / W_{\iota(O)}$.
  %  To be precise, t
  For any subsets $I$, $J$ of generators of $\sr$,
  the Bruhat order on $\mfs r$ induces a poset structure on
  %$W_{\iota(M)} \backslash W / W_{\iota(O)}$
  $W_I \backslash W / W_J$
  as follows.  We declare
$W_IvW_J \leq W_IwW_J$ if elements of the cosets satisfy any of the
three (equivalent) inequalities in the Bruhat order on $\mfs r$~\cite[Lem.\,2.2]{DougInv}.
  \begin{enumerate}[(i)]
      \item The minimal element of $W_IvW_J$ is less than or equal to
the minimal element of $W_IwW_J$.
      \item The maximal element of $W_IvW_J$ is less than or equal to
the maximal element of $W_IwW_J$.
      \item At least one element of $W_IvW_J$ is less than or equal to
at least one element of $W_IwW_J$.
  \end{enumerate}
%  (See \cite{DougInv}.)
  We call this poset the {\em Bruhat order on
  %the double cosets
  $W_I \backslash W / W_J$}.
  A fourth equivalent inequality can be stated in terms of
  matrices of exponents defined by monomials in $\mathcal A_{M,O}$.
  (See, e.g., \cite{HoSkanDoubleCoset}.)
  Given a matrix $C = (c_{i,j}) \in \mat n(\mathbb N)$, 
  define the matrix
  $C^* = (c^*_{i,j}) \in \mat n(\mathbb N)$
  by
  \begin{equation}\label{eq:cstar}
    c^*_{i,j} =
    %(c_{1,1} + \cdots + c_{1,j})
    %+ (c_{2,1} + \cdots + c_{2,j})
    %+ \cdots
    %+ (c_{i,1} + \cdots + c_{i,j}).
    \text{sum of entries of $C_{[i],[j]}$}.
  \end{equation}
  %$i \times j$ submatrix of $C$}.
%    \end{equation}
  \begin{prop}\label{p:doubleehresmann}
    Fix monomials
    \begin{equation*}
      (x_{M,O})^{e,v} = \prod_{i,j} x_{i,j}^{c_{i,j}},
      \quad
      (x_{M,O})^{e,w} = \prod_{i,j} x_{i,j}^{d_{i,j}},
    \end{equation*}
    in $\mathcal A_{M,O}$
    and define matrices $C^*$, $D^*$ as in (\ref{eq:cstar}).
    Then we have
    %\begin{equation*}
    $W_{\iota(M)}vW_{\iota(O)} \leq W_{\iota(M)}wW_{\iota(O)}$
    in the Bruhat order if and only if $C^* \geq D^*$
    in the componentwise order.
  \end{prop}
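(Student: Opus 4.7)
The plan is to pass to minimal double coset representatives, identify $c^*_{i,j}$ with a classical rank statistic of a permutation matrix, and then apply the Ehresmann--Bruhat criterion.

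First I would replace $v$ and $w$ by the minimal elements of their double cosets: by Proposition~\ref{p:MOcosetcorr}(ii), (iv), these are the permutations produced by Algorithm~\ref{a:mattoperm}.  By characterization (i) of the double coset Bruhat order, this converts the left-hand side of the biconditional into $v \leq w$ in $\mathfrak{S}_r$.  Next, with $A_i = \alpha_1 + \cdots + \alpha_i$ and $B_j = \beta_1 + \cdots + \beta_j$, I would trace through the algorithm: the sorted form of the monomial gives $m_k \leq i \iff k \leq A_i$, and the construction of $v$ from the rearrangement $u$ gives $u_k \leq j \iff v(k) \leq B_j$.  Summing $c_{a,b}$ over $a \leq i$, $b \leq j$ then yields
\[
c^*_{i,j} \;=\; \bigl|\{k \leq A_i : v(k) \leq B_j\}\bigr|,
\]
which is exactly the Ehresmann rank statistic $r_v(A_i, B_j)$ of $P(v)$, and analogously $d^*_{i,j} = r_w(A_i, B_j)$.

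The classical Ehresmann--Bruhat criterion states that $v \leq w$ in $\mathfrak{S}_r$ iff $r_v(a,b) \geq r_w(a,b)$ for all $a, b \in [r]$.  Specialization to the $n^2$ lattice points $(A_i, B_j)$ immediately gives the forward direction $v \leq w \Rightarrow C^* \geq D^*$.

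The main obstacle is the converse, which requires promoting the $n^2$ corner inequalities $C^* \geq D^*$ to all $r^2$ Ehresmann inequalities.  The minimality of $v$ and $w$ is essential here: within each $(i,j)$-block of $P(v)$, the $c_{i,j}$ ones lie along a diagonal of slope one, located within the subblock by the row and column partial sums of $C$.  This forces $r_v(a,b)$ at every $(a,b)$ to be an explicit sum of $\min$-expressions in the entries of $C^*$, identical in form to the corresponding expression for $r_w(a,b)$ in $D^*$.  A covering argument factoring the inequality $C^* \geq D^*$ into single Bruhat-cover steps on matrices, each corresponding to a Bruhat cover of the minimal representatives in $\mathfrak{S}_r$, then yields the full Ehresmann inequalities by induction.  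This closing step is the matrix reformulation of parabolic Bruhat order recorded in \cite{HoSkanDoubleCoset} (see also \cite[Lem.~2.2]{DougInv}), which I would cite to finish.
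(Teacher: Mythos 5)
The paper does not actually prove Proposition~\ref{p:doubleehresmann}; it states it as known, pointing the reader to \cite{HoSkanDoubleCoset} in the sentence preceding the statement. So there is no internal proof to match your plan against, and your write-up is strictly more informative than what the paper offers. Your reduction is sound: passing to the Bruhat-minimal double coset representatives via Proposition~\ref{p:MOcosetcorr}(i),(ii),(iv) and criterion (i) for the double coset order is exactly right, and the identification $c^*_{i,j} = \bigl|\{k \leq A_i : v(k) \leq B_j\}\bigr| = r_v(A_i,B_j)$ follows correctly from the sortedness of $M$ and $O$ (one has $m_k \leq i$ iff $k \leq A_i$, and $o_{v_k} \leq j$ iff $v_k \leq B_j$). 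With the orientation of Ehresmann's criterion as you state it (the identity maximizes the rank function), the forward implication $v \leq w \Rightarrow C^* \geq D^*$ is complete.

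The converse is where all the content sits, and you correctly locate the obstacle: one must upgrade the $n^2$ inequalities at the block corners $(A_i,B_j)$ to the full set of $r^2$ Ehresmann inequalities, and this genuinely uses minimality of the representatives (the diagonal placement of the $c_{i,j}$ ones inside each block, which makes $r_v(a,b)$ an explicit min-plus expression in the entries of $C$). However, as written this step is asserted rather than carried out --- the claimed monotonicity of those min-expressions under $C^* \geq D^*$ is not obvious and is precisely what the cover-step induction must establish --- and you close by citing the same reference \cite{HoSkanDoubleCoset} that the paper itself relies on. So the proposal is a correct and well-structured outline, on par with (indeed more detailed than) the paper's treatment, but it is not a self-contained proof of the backward direction.
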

%($\star$ Repeat proof from the unpublished \cite{HoSkanDoubleCoset} here?)

  The Bruhat order on $W_{\iota(M)}\backslash W/W_{\iota(O)}$
    %double cosets
    is closely related to certain totally nonnegative polynomials
    in $\mathcal A_{M,O}$.
%  
%  The definition (\ref{eq:tnnpolydef}) of totally nonnegative polynomials
%  suggests defining a partial order $\leq_T$
%  on all monomials in $\zx$ by declaring
%  \begin{equation}
%    (x_{M,O})^{e,v} \leq_T (x_{P,Q})^{e,w} \ \text{ if } \
%    (x_{P,Q})^{e,w} - (x_{M,O})^{e,v} \text{ is a totally nonnegative polynomia%l. }
%  \end{equation}
%  The subposet of $(\zxn, \leq_T)$ induced by monomials
%  $\{x^{e,w} \,|\, w \in \sn \}$ was shown in \cite{DGSBruhat} to be isomorphic
  %  to the (dual of the)
    Indeed, when $M = O = 1^n$, totally nonnegative polynomials of the form
    $x^{e,v} - x^{e,w}$
    are characterized by the Bruhat order on $\sn$~\cite{DGSBruhat}.
%  Bruhat order on $\sn$.
  \begin{thm}\label{t:DGS}
    For $v, w \in \sn$,
    the polynomial $x^{e,v} - x^{e,w}$ is totally nonnegative
%    we have $x^{e,w} \leq_T x^{e,v}$
    if and only if $v \leq w$ in the Bruhat order.
    %on $\sn$.
  \end{thm}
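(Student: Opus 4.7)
The plan is to prove both implications by exploiting the fact that Bruhat cover relations in $\sn$ correspond exactly to differences $x^{e,u} - x^{e,w}$ that factor as a product of entries of $x$ times a single $2 \times 2$ minor of $x$. A standard fact I will invoke is that every cover $u \lessdot w$ in $\sn$ has the form $w = u \cdot t_{ij}$ with $i < j$ and $u_i < u_j$ (the other entries of $u$ and $w$ agreeing), and that $v \leq w$ in the Bruhat order if and only if the counting function $P_{\leq i, \leq j}(\pi) \defeq |\{k \leq i : \pi_k \leq j\}|$ satisfies $P_{\leq i, \leq j}(v) \geq P_{\leq i, \leq j}(w)$ for all $i,j \in [n]$.

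For the forward direction, I will fix a maximal chain $v = u_0 \lessdot u_1 \lessdot \cdots \lessdot u_m = w$ and telescope:
\begin{equation*}
x^{e,v} - x^{e,w} = \sum_{t=0}^{m-1} \bigl( x^{e,u_t} - x^{e,u_{t+1}} \bigr).
\end{equation*}
Writing $u_{t+1} = u_t \cdot t_{ij}$ with $i < j$ and $(u_t)_i < (u_t)_j$, the $t$th summand factors as
\begin{equation*}
\Bigl( \prod_{k \neq i,j} x_{k,(u_t)_k} \Bigr) \bigl( x_{i,(u_t)_i} x_{j,(u_t)_j} - x_{i,(u_t)_j} x_{j,(u_t)_i} \bigr),
\end{equation*}
where the second factor is the $(\{i,j\}, \{(u_t)_i, (u_t)_j\})$-minor of $x$. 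On $\tnn n$ both factors evaluate nonnegatively, so each summand is a totally nonnegative polynomial, and hence so is $x^{e,v} - x^{e,w}$.

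For the reverse direction I will prove the contrapositive. Assuming $v \not\leq w$, pick $(i_0, j_0)$ with $a \defeq P_{\leq i_0, \leq j_0}(v) < P_{\leq i_0, \leq j_0}(w) \defeq b$. For a real parameter $M \geq 1$, let $A = A(M)$ be the $n \times n$ matrix
\begin{equation*}
A_{k,\ell} \defeq \begin{cases} M & \text{if } k \leq i_0 \text{ and } \ell \leq j_0, \\ 1 & \text{otherwise.} \end{cases}
\end{equation*}
A direct check shows $A \in \tnn n$: since the matrix has only two distinct row profiles, every $r \times r$ minor with $r \geq 3$ vanishes by row repetition, every $2 \times 2$ minor equals $0$ or $M - 1 \geq 0$, and every entry is positive. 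Moreover $x^{e,v}(A) = M^{a}$ and $x^{e,w}(A) = M^{b}$, so choosing $M > 1$ yields $x^{e,v}(A) - x^{e,w}(A) = M^{a} - M^{b} < 0$, showing that $x^{e,v} - x^{e,w}$ fails to be totally nonnegative.

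The main subtlety is not in the computations but in correctly lining up two pieces of Bruhat-order combinatorics with the monomial identities: first, that any cover is realized by a transposition $t_{ij}$ with $i<j$ \emph{and} $u_i < u_j$, which is exactly what produces a $2 \times 2$ minor with row and column indices in increasing order; and second, that the entry criterion via $P_{\leq i, \leq j}$ captures non-comparability in a form that matches the evaluation of $x^{e,\pi}$ on the block matrix $A(M)$. Once both are in hand, the argument is an induction on one side and an explicit rank-two construction on the other.
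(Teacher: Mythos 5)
Your proof is correct, and it uses exactly the two devices the paper deploys when it generalizes this result to arbitrary monomials in $\mathcal A_{M,O}$: the telescoping decomposition of $x^{e,v}-x^{e,w}$ along a Bruhat chain into products of a $2\times 2$ minor with a monomial, and the rank-two block matrix (the paper's $E^{k,\ell}(t)$, your $A(M)$) together with the Ehresmann/$C^*$ criterion for the converse. The paper itself states Theorem~\ref{t:DGS} with a citation to \cite{DGSBruhat} rather than a proof, so your argument is a faithful self-contained specialization of the paper's own method and nothing further is needed.
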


  We will now extend this result to all monomials in $\zx$.
%  The definition (\ref{eq:tnnpolydef}) of totally nonnegative polynomials
  %  suggests defining a partial order $\leq_T$
  Let us define a partial order $\leq_T$
  on all monomials in $\zx$ by declaring
    $(x_{M,O})^{e,v} \leq_T (x_{P,Q})^{e,w}$  if 
    $(x_{P,Q})^{e,w} - (x_{M,O})^{e,v}$ is a totally nonnegative polynomial.
%  \begin{equation}
%    (x_{M,O})^{e,v} \leq_T (x_{P,Q})^{e,w} \ \text{ if } \
%    (x_{P,Q})^{e,w} - (x_{M,O})^{e,v} \text{ is a totally nonnegative polynomial. }
%  \end{equation}
We call this the {\em total nonnegativity order} on monomials in $\zx$.
It is not hard to show that
the total nonnegativity order is a disjoint
union of its restrictions to the multigraded components (\ref{eq:multigrading})
of $\zx$.
%First we observe the following.
%  \begin{prop}
%    Fix multisets $M$,$O$,$P$,$Q$ of $[n]$.  Then the monomials
%    $(x_{M,O}){e,w}$ and $(x_{P,Q}){e,v}$ are incomparable in the total
%    nonnegativity order unless $M=P$ and $O=Q$.
%  \end{prop}
%  \begin{proof}
%    $\star$ Prove this.
%  \end{proof}
\begin{lem}
  Monomials
\begin{equation}\label{eq:monomials}
  \prod_{i,j} \ntnsp x_{i,j}^{c_{i,j}},
  \quad
  \prod_{i,j} \ntnsp x_{i,j}^{d_{i,j}}
\end{equation}
are comparable in the total nonnegativity order
only if they belong to the same multigraded component
%$\mathcal A_{M,O}$
of $\mathbb Z[x]$.
\end{lem}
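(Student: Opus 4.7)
The plan is to exploit the action of positive diagonal scaling on $\tp n$. For any $A \in \tp n$ and any positive diagonal matrices $D = \mathrm{diag}(t_1,\dotsc,t_n)$ and $E = \mathrm{diag}(u_1,\dotsc,u_n)$, the matrix $DAE$ again lies in $\tp n$, since each minor satisfies $\det((DAE)_{I,J}) = \prod_{i \in I}t_i \prod_{j \in J}u_j \cdot \det(A_{I,J})$ and is therefore positive. Under this scaling, a monomial with exponent matrix $(c_{i,j})$ transforms as
\[
\prod_{i,j} (DAE)_{i,j}^{c_{i,j}}
= \Bigl(\prod_i t_i^{\alpha_i}\Bigr)\Bigl(\prod_j u_j^{\beta_j}\Bigr)\prod_{i,j} a_{i,j}^{c_{i,j}},
\]
where $\alpha_i = \sum_j c_{i,j}$ and $\beta_j = \sum_i c_{i,j}$ are the row and column sums of the exponent matrix, i.e., precisely the data (in the form (\ref{eq:CtoMO})) that index its graded component in (\ref{eq:multigrading}).

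Suppose the two monomials in (\ref{eq:monomials}) are comparable in $\leq_T$, so, after relabeling if necessary, $f = \prod x_{i,j}^{d_{i,j}} - \prod x_{i,j}^{c_{i,j}}$ is totally nonnegative. Let $\alpha_i,\beta_j$ denote the row and column sums of $(d_{i,j})$ and let $\alpha'_i,\beta'_j$ denote those of $(c_{i,j})$. Fix any $A \in \tp n$, and in the substitution above take $u_j = 1$ for all $j$, $t_i = 1$ for $i \ne k$, and $t_k = s$. Then $f(DAE) \ge 0$ becomes
\[
s^{\alpha_k}\,\textstyle\prod_{i,j} a_{i,j}^{d_{i,j}} \;\geq\; s^{\alpha'_k}\,\textstyle\prod_{i,j} a_{i,j}^{c_{i,j}}
\qquad\text{for all } s > 0.
\]
Since both products are strictly positive constants depending only on $A$, letting $s \to 0^+$ forces $\alpha_k \le \alpha'_k$ while letting $s \to \infty$ forces $\alpha_k \ge \alpha'_k$. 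Hence $\alpha_k = \alpha'_k$, and running this for each $k$ yields equality of every row sum.

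Repeating the argument with $D = I$ and a single $u_k$ varying gives $\beta_j = \beta'_j$ for every $j$. Equal row and column sums of the two exponent matrices are exactly the condition distinguishing a single graded component $\mathcal A_{M,O}$ in (\ref{eq:multigrading}), so the two monomials belong to the same summand.

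The argument presents no real obstacle; the only technical point worth noting is that conjugation by positive diagonal matrices preserves total positivity, which is immediate from the product rule for determinants applied to each submatrix. Everything else is a routine one-variable growth comparison, separating degrees in each row and column via the independent scaling parameters $t_i$ and $u_j$.
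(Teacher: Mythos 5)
Your proof is correct and is essentially the paper's own argument: both detect a mismatch in row (or column) degree sums by evaluating the difference on a one-parameter family of totally nonnegative matrices obtained by scaling rows (or columns) by $t$ and letting $t$ tend to $0^+$ or $\infty$. The only cosmetic differences are that the paper scales a prefix of rows of the all-ones matrix (so it compares partial sums $\alpha_1+\cdots+\alpha_k$ at the least differing index) while you scale a single row of an arbitrary totally positive matrix (comparing individual row sums directly), and your word ``conjugation'' should be ``two-sided multiplication by diagonal matrices'' --- the computation you actually carry out is the correct one.
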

\begin{proof}
  For $k,\ell \in [n]$ and
  %$t \geq 0$,
  $t \in \mathbb R_{\geq 0}$,
  define the $n \times n$ matrix $E^{k,\ell}(t) = (e_{i,j}^{k,\ell})_{i,j \in [n]}$ by
  \begin{equation}\label{eq:eklt}
    e_{i,j}^{k,\ell} = \begin{cases}
      t &\text{if $i \leq k$ and $j \leq \ell$},\\
      1 &\text{otherwise}.
      \end{cases}
      \end{equation}
This matrix is totally nonnegative if $t \geq 1$, or $k = n$, or $\ell = n$.
  
%Suppose the monomials do not belong to the
%same multigraded component of $\zx$.
%If the monomials have degrees $r < r'$ then their evaluations
%on the matrix $T^{n,n}$
%_whose entries are all $t$, $t \geq 0$,
%are $t^{r}$, $t^{r'}$.
%The difference
%$t^{r} - t^{r'}$ can be made negative or positive by choosing
%$t<1$ or $t>1$, respectively.

%Now suppose that the monomials both have degree $r$ and belong to
Suppose that the monomials belong to
components $\mathcal A_{M,O}$ and $\mathcal A_{M',O'}$ of $\zx$,
with $M$, $O$, as in (\ref{eq:MO}) and
\begin{equation*}
%  \begin{alignedat}{2}
%  M &= 1^{\alpha_1} \cdots n^{\alpha_n}, &\qquad O &= 1^{\beta_1} \cdots n^{\beta_n},\\
%  M' &= 1^{\alpha_1'} \cdots n^{\alpha_n'}, &\qquad O' &= 1^{\beta_1'} \cdots n^{\beta_n'}.
  M' = 1^{\alpha_1'} \cdots n^{\alpha_n'}, \qquad O' = 1^{\beta_1'} \cdots n^{\beta_n'}.
%  \end{alignedat}
\end{equation*}
If $M \neq M'$, then let $k \in [n]$ be the least index
appearing with different multiplicities in the two multisets.
Evaluating the monomials (\ref{eq:monomials}) at $E^{k,n}(t)$ yields
$t^{\alpha_1 + \cdots + \alpha_k}$ and $t^{\alpha'_1 + \cdots + \alpha'_k}$.  The difference of
these can be made positive or negative by choosing $t<1$ or $t>1$.
On the other hand, if $O \neq O'$, then
the evalulation of the monomials (\ref{eq:monomials}) at matrices
of the form $E^{n,\ell}(t)$ leads to a similar conclusion.
\end{proof}

%i.e., only if
%\begin{equation*}
%  \begin{aligned}
%    c_{i,1} + \cdots + c_{i,n} &= d_{i,1} + \cdots + d_{i,n}
%    \qquad \text{for all } i,\\
%    c_{1,j} + \cdots + c_{n,j} &= d_{1,j} + \cdots + d_{n,j}
%    \qquad \text{for all } j.
%  \end{aligned}
%  \end{equation*}

%  Associate a permutation $u = u(M,O,w) \in \mfs k$
%  to the triple $(M,O,w)$ as follows.
%  \begin{enumerate}
%  \item Write $O = 1^{\alpha_1} \cdots n^{\alpha_n}$.
%  \item Let $j_1 < \cdots < j_{\alpha_1}$ be the positions of the
%    $\alpha_1$ ones in $w$, let $j_{\alpha_1 + 1} < \cdots < j_{\alpha_1 + \alpha_2}$
%    be the positions of the $\alpha_2$ twos in $w$, etc.
%  \item For $i = 1, \dotsc, k$, place letter $i$ in position $j_i$ of $u$.
%  \end{enumerate}

\begin{thm}
  Fix $r$-element multisets $M = 1^{\alpha_1} \cdots n^{\alpha_n}$, $O = 1^{\beta_1} \cdots n^{\beta_n}$ 
  as in (\ref{eq:MO}),
  %  Fix $r$-element multisets $M$, $O$ as above,
  and matrices $C, D \in \mat n(\mathbb N)$ with row and column sums
  $(\alpha_1,\dotsc,\alpha_n)$, $(\beta_1,\dotsc,\beta_n)$, and define the
  polynomial
    \begin{equation*}
    f(x) = \prod_{i,j}\ntnsp x_{i,j}^{c_{i,j}} - \prod_{i,j}\ntnsp x_{i,j}^{d_{i,j}}
%    x_{m_1,v_1} \cdots x_{m_k,v_k}
%    -
%    x_{m_1,w_1} \cdots x_{m_k,w_k}
  \end{equation*}
in $\mathcal A_{M,O}$.
  %matching the multiplicities in (\ref{eq:MO}).
%  as in (\ref{eq:CtoMO}).
  %  such that
%  and monomials
%    $\prod \ntnsp x_{i,j}^{c_{i,j}}$\ntnsp, $\prod \ntnsp x_{i,j}^{d_{i,j}}$ in
  %$\mathcal A_{M,O}$.
  Then the following are equivalent.
%    and let $v$, $w$, be
    %    rearrangements of $O$.
    \begin{enumerate}[(i)]
      \item $f(x)$
  is totally nonnegative.
\item $C^* \geq D^*$ in the componentwise order.
\item $w(C) \leq w(D)$ in the Bruhat order on $\sr$.
\item $f(x)$ is equal to a sum of products
of the form $\det(x_{I,J})x_{u_1,v_1} \cdots x_{u_{r-2},v_{r-2}}$ in $\mathcal A_{M,O}$
with $|I| = |J| = 2$.
%  \item ($\star$ Include more equivalent conditions?)
  \end{enumerate}
  \end{thm}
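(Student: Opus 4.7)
The plan is to establish the cycle $(\text{iv}) \Rightarrow (\text{i}) \Rightarrow (\text{ii}) \Leftrightarrow (\text{iii}) \Rightarrow (\text{iv})$, with the equivalence $(\text{ii}) \Leftrightarrow (\text{iii})$ amounting to a repackaging of Propositions~\ref{p:MOcosetcorr} and~\ref{p:doubleehresmann}. The implication $(\text{iv}) \Rightarrow (\text{i})$ is immediate; $(\text{i}) \Rightarrow (\text{ii})$ will be proved by evaluating $f$ at the test matrices of (\ref{eq:eklt}); and the main work lies in $(\text{iii}) \Rightarrow (\text{iv})$, which I would handle by induction on Bruhat length, exploiting that each Bruhat cover in $\sr$ contributes a single $2\times 2$ minor.

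For $(\text{iv}) \Rightarrow (\text{i})$, each summand $\det(x_{I,J}) x_{u_1,v_1} \cdots x_{u_{r-2},v_{r-2}}$ with $|I|=|J|=2$ evaluates on $A \in \tnn n$ to a product of a nonnegative $2\times 2$ minor with nonnegative matrix entries, so $f(A) \geq 0$.  For $(\text{i}) \Rightarrow (\text{ii})$, fix $k,\ell \in [n-1]$ and evaluate $f$ at $E^{k,\ell}(t)$: a direct computation shows that $\prod_{i,j} x_{i,j}^{c_{i,j}}$ evaluates to $t^{c^*_{k,\ell}}$, and similarly for $D$. Since $E^{k,\ell}(t) \in \tnn n$ for $t \geq 1$, total nonnegativity of $f$ forces $t^{c^*_{k,\ell}} \geq t^{d^*_{k,\ell}}$ for all $t \geq 1$, hence $c^*_{k,\ell} \geq d^*_{k,\ell}$. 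The boundary cases $k=n$ or $\ell=n$ give equality automatically from (\ref{eq:CtoMO}).

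For $(\text{ii}) \Leftrightarrow (\text{iii})$, Proposition~\ref{p:MOcosetcorr}(iv) identifies $w(C)$ and $w(D)$ as the Bruhat-minimal elements of their respective double cosets in $W_{\iota(M)} \backslash \sr / W_{\iota(O)}$; by the definition of the coset Bruhat order via minimal representatives, the inequality $w(C) \leq w(D)$ in $\sr$ is equivalent to the coset inequality, which by Proposition~\ref{p:doubleehresmann} is equivalent to $C^* \geq D^*$.

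Finally, for $(\text{iii}) \Rightarrow (\text{iv})$, I would induct on $\ell(w(D)) - \ell(w(C))$; the base case $w(C) = w(D)$ gives $f = 0$. For the inductive step, select a saturated Bruhat chain $w(C) = w^{(0)} \lessdot w^{(1)} \lessdot \cdots \lessdot w^{(s)} = w(D)$ in $\sr$. Each cover $w^{(t)} \lessdot w^{(t+1)}$ is realized by swapping the entries of $w^{(t)}$ at some positions $i < j$ with $w^{(t)}_i < w^{(t)}_j$, and a direct monomial computation gives
\[
(x_{M,O})^{e,w^{(t)}} - (x_{M,O})^{e,w^{(t+1)}} = \det\bigl(x_{\{m_i,m_j\},\{o_{w^{(t)}_i},\,o_{w^{(t)}_j}\}}\bigr) \cdot \ntnsp \prod_{k \neq i,j} \ntksp x_{m_k,\,o_{w^{(t)}_k}},
\]
where the $2\times 2$ minor vanishes (and the summand may be dropped) whenever $m_i = m_j$ or $o_{w^{(t)}_i} = o_{w^{(t)}_j}$. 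Summing this telescope expresses $f(x)$ in the required form; that each intermediate monomial $(x_{M,O})^{e,w^{(t)}}$ (and hence each summand of the decomposition) lies in $\mathcal A_{M,O}$ is immediate from (\ref{eq:permmon}). The conceptual heart of the argument, and the main obstacle, is recognizing that Bruhat covers in $\sr$ correspond precisely to $2\times 2$ minors of the generalized submatrix $x_{M,O}$; once this correspondence is in hand, both the inductive strategy and the final decomposition fall into place.
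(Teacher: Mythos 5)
Your proposal is correct and follows essentially the same route as the paper: the same evaluation at the test matrices $E^{k,\ell}(t)$ for (i)\,$\Rightarrow$\,(ii), the same appeal to Proposition~\ref{p:doubleehresmann} for (ii)\,$\Leftrightarrow$\,(iii), and the same telescoping sum along a saturated Bruhat chain for (iii)\,$\Rightarrow$\,(iv), with each cover contributing a $2\times 2$ minor (or zero). Your version is slightly more explicit than the paper's in writing out the minor $\det\bigl(x_{\{m_i,m_j\},\{o_{w_i},o_{w_j}\}}\bigr)$ produced by each cover, but the argument is the same.
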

  \begin{proof}
    (i $\Rightarrow$ ii)
    Suppose $C^* \not \geq D^*$ in the componentwise order and
    let $(k,\ell)$ the the lexicographically least pair
    satisfying $c^*_{k,\ell} < d^*_{k,\ell}$.
    Now choose $t > 1$ and evaluate $f(x)$ at the totally nonnegative
    matrix $E^{k,\ell}(t)$ to obtain $t^{c^*_{k,\ell}} - t^{d^*_{k,\ell}} < 0$.
    It follows that $f(x)$ is not a totally nonnegative polynomial.
    
    \noindent
    (ii $\Rightarrow$ iii)  
    This follows from Proposition~\ref{p:doubleehresmann}
    and the definition of the Bruhat order on double cosets.

    \noindent
    (iii $\Rightarrow$ iv)
    Suppose that $w(C) \leq w(D)$ and let $p = \ell(w(D)) - \ell(w(C))$. Then there exist 
    a sequence 
    \begin{equation*}
        w(C) = y^{(0)} < y^{(1)} < \cdots < y^{(p-1)} < y^{(p)} = w(D)
    \end{equation*}
    of permutations and a
    sequence $( (i_0,j_0), \dotsc, (i_{p-1\ntnsp},j_{p-1}))$
    %$(t^{(0)}, \dotsc, t^{(p-1)})$
    of transpositions in $\sr$ such that
    we have
    \begin{equation*}
        y^{(k)} = (i_{k-1\ntnsp},j_{k-1})y^{(k-1)}, \qquad \ell(y^{(k)}) = \ell(y^{(k-1)}) + 1
    \end{equation*}
    for $k = 1, \dotsc, p$.  We may thus write $f(x) = (x_{M,O})^{e,w(C)} - (x_{M,O})^{e,w(D)}$ as the telescoping sum
    \begin{equation*}
        \Big(\ntnsp(x_{M,O})^{e,y^{(0)}} \ntksp- (x_{M,O})^{e,y^{(1)}}\Big)
        + \Big(\ntnsp(x_{M,O})^{e,y^{(1)}} \ntksp- (x_{M,O})^{e,y^{(2)}}\Big)
        + \cdots +
        \Big(\ntnsp(x_{M,O})^{e,y^{(p-1)}} \ntksp- (x_{M,O})^{e,y^{(p)}}\Big),
    \end{equation*}
    where each parenthesized difference either has the desired
    form 
    %$\det(x_{I,J}) x_{u_1,v_1} \cdots x_{u_{r-2},v_{r-2})$ 
    or is $0$.
%    in $\sr$ satisyfing $\ell(y^{(i)}) = \ell(y^{(i-1)})+1$ for $i = 1, \dotsc, p-1$.
%    Then there exists a sequence $y^{(0)}$

\noindent 
(iv $\Rightarrow$ i)
A sum of products of minors is a totally nonnegative polynomial.
    % \noindent
    % (iii $\Rightarrow$ i)  
    % Suppose that $w(C) \leq w(D)$, 
    % and
    % observe that for any $n \times n$ totally nonnegative matrix
    % $A$, the $r \times r$ matrix $A_{M,O}$ is also totally nonnegative.
    % Thus by Theorem~\ref{t:DGS}
    % we have $f(A) = (A_{M,O})^{e,w(C)} - (A_{M,O})^{e,w(D)} \geq 0$.
 \end{proof}

%  ($\star$ Replace the following example with a bigger example, using
%  the three monomials appearing in earlier examples.)

  For example, let us revisit the monomials
  (\ref{eq:3monomials}) -- (\ref{eq:3moremonomials})
  in the graded component
  $\mathcal A_{1123,1222}$ of $\mathbb Z[x_{1,1}, x_{1,2},\dotsc,x_{3,3}]$.
    It is easy to see that $1234 < 2314 < 2341$ in the Bruhat order on
    $\mfs 4$ and that the application of (\ref{eq:cstar})
    to the corresponding matrices in (\ref{eq:3x3matrices})
    yields the componentwise comparisons
  \begin{equation}\label{eq:star3x3matrices}
  \begin{bmatrix}
    1 & 2 & 2 \\ 1 & 3 & 3 \\ 1 & 4 & 4
  \end{bmatrix}
  \geq
  \begin{bmatrix}
    0 & 2 & 2 \\ 1 & 3 & 3 \\ 1 & 4 & 4
  \end{bmatrix}
  \geq
  \begin{bmatrix}
    0 & 2 & 2 \\ 0 & 3 & 3 \\ 1 & 4 & 4
  \end{bmatrix} \ntksp.
\end{equation}
  Thus we have $(x_{1123,1222})^{1234,1234} \geq_T(x_{1123,1222})^{1234,2314} \geq_T (x_{1123,1222})^{1234,2341}$, i.e.,
  %the differences
\begin{equation*}
  x_{1,1}x_{1,2}x_{2,2}x_{3,2}
  \ \geq_T\
  x_{\smash{1,2}}^2x_{2,1}x_{3,2}
  %, \quad
  %x_{1,1}x_{1,2}x_{2,2}x_{3,2} -
  \ \geq_T\
  \smash{x_{1,2}^2}x_{2,2}x_{3,1}.
  %, \quad
%  x_{\smash{1,2}}^2x_{2,1}x_{3,2} - \smash{x_{1,2}^2}x_{2,2}x_{3,1}
\end{equation*}
Furthermore, the chain $1234 < 2134 < 2314 < 2341$ 
in the Bruhat order on $\mfs 4$
with 
\begin{equation}
2134 = (1,2)1234, \quad
2314 = (2,3)2134, \quad
2341 = (3,4)2314
\end{equation}
allows us to write
$x_{1,1}x_{1,2}x_{2,2}x_{3,2} -  x_{\smash{1,2}}^2x_{2,1}x_{3,2}$
%(x_{1123,1222})^{1234,1234} - (x_{1123,1222})^{1234,2314}$
as
\begin{equation*}
\begin{gathered}
    \Big( \ntnsp(x_{1123,1222})^{1234,1234} - (x_{1123,1222})^{1234,2134}\Big)
\,+\,
\Big(\ntnsp(x_{1123,1222})^{1234,2134} - (x_{1123,1222})^{1234,2314}\Big)\\
= \det \ntnsp\begin{bmatrix} x_{1,1} & x_{1,2} \\ x_{1,1} & x_{2,2} \end{bmatrix}\ntnsp x_{2,2} x_{3,2} 
\,+\,
x_{1,2}\det \ntnsp \begin{bmatrix} x_{1,1} & x_{1,2} \\ x_{2,1} & x_{2,2} \end{bmatrix}\ntnsp x_{3,2}, 
\end{gathered}
\end{equation*}
and to write 
 $x_{\smash{1,2}}^2x_{2,1}x_{3,2}
  %, \quad
  %x_{1,1}x_{1,2}x_{2,2}x_{3,2} -
  -
  \smash{x_{1,2}^2}x_{2,2}x_{3,1}$
as
\begin{equation*}
\Big(\ntnsp(x_{1123,1222})^{1234,2314} - (x_{1123,1222})^{1234,2341}\Big)
= x_{1,2}^2 \det \ntnsp\begin{bmatrix} x_{2,1} & x_{2,2} \\ x_{3,1} & x_{3,2} \end{bmatrix}\ntnsp.
\end{equation*}

\section{Main results}\label{s:main}
%\section{Criteria for ratios of principal submatrices}\label{s:main}

Let $\tp n$ be the set of totally positive $n \times n$ matrices.
To characterize ratios of products of permanents which are bounded above and/or nontrivially bounded below on the set $\tp n$, we first consider necessary conditions on the multisets of rows and columns appearing in such ratios.
%Consider a ratio of the form
Let
\begin{equation}\label{eq:mainratio}
    R(x) = \frac{\perm(x_{I_1,I'_1})\perm(x_{I_2,I'_2})\cdots\perm(x_{I_r,I'_r})}{\perm(x_{J_1,J'_1})\perm(x_{J_2,J'_2})\cdots\perm(x_{J_q,J'_q})},
\end{equation} 
be such a ratio, where 
\begin{equation}\label{eq:multisetseqs}
(I_1, \dotsc, I_r), \quad
(I'_1,\dotsc,I'_r), \quad
(J_1,\dotsc,J_q), \quad
(J'_1,\dotsc,J'_q)
\end{equation}
are multisets of $[n]$ satisfying $|I_k| = |I'_k|$,
$|J_k| = |J'_k|$ for all $k$.
In order for
%t is impossible for 
$R(x)$ to be bounded above or nontrivially bounded below on $\tp n$ the multisets (\ref{eq:multisetseqs}) must
be related in terms of
%satisfy a condition
%which can be expressed in terms of 
an operation which we call
%A necessary condition for $R(x)$ to be bounded on the set of $n \times n$ totally positive matrices can be stated in terms of a 
%certain 
{\em multiset union}.
%of the multisets (\ref{eq:multisetseqs}).
Given multisets $M = 1^{\alpha_1} \cdots n^{\alpha_n}$, $O = 1^{\beta_1} \cdots n^{\beta_n}$ of $[n]$, define their multiset union to be
\begin{equation}\label{eq:multiuniondef}
M \multiu O \defeq 1^{\alpha_1 + \beta_1} \cdots n^{\alpha_n + \beta_n}.
\end{equation}
%(Change $\alpha$, $\beta$ above and in previous section to $\gamma$, $\delta$.)
For example, $1124 \multiu 1233 = 11122334$.
%We have the following.

%($\star$ Say something about nontrivial lower bounds.  The following proposition says that we can have an upper bound or a nontrivial lower bound only if the multiset condition is satisfied.)

\begin{prop}\label{p:ST0necc}
Given multiset sequences as in (\ref{eq:multisetseqs}),
a ratio 
%$R(x)$ as in 
(\ref{eq:mainratio})   
can be bounded above or nontrivially bounded below on $\tp n$ 
%the set of all totally positive matrices
only if we have the multiset equalities
\begin{equation}\label{eq:ST0}
    I_1 \Cup \cdots \Cup I_r = J_1 \Cup \cdots \Cup J_q,
    \qquad
    I'_1 \Cup \cdots \Cup I'_r = J'_1 \Cup \cdots \Cup J'_q.
\end{equation}
\end{prop}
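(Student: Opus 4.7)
My plan is to adapt the strategy of the preceding lemma (the one characterizing when monomials are comparable in the total nonnegativity order) to the setting of products of permanents, using the family of rank-two test matrices $E^{k,\ell}(t)$ defined in (\ref{eq:eklt}).

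First, I would reduce to testing on $\tnn n$. Since $\tp n$ is dense in $\tnn n$ and $R(x)$ is a continuous rational function wherever its denominator is nonzero, any upper bound $k_2$ or positive lower bound $k_1$ that holds on all of $\tp n$ also holds at every $A \in \tnn n$ for which the permanents in the denominator do not vanish. For each $k \in [n-1]$ and each $t > 0$, the matrix $E^{k,n}(t)$ is totally nonnegative with strictly positive entries, so every $\perm(A_{I,I'})$ and $\perm(A_{J,J'})$ appearing in $R$ is a sum of positive reals, hence strictly positive. Thus $R$ is defined at $E^{k,n}(t)$ and is sandwiched between $k_1$ and $k_2$.

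Next I would compute $R(E^{k,n}(t))$ explicitly. With $A = E^{k,n}(t)$, the submatrix $A_{M,O}$ for an $r$-element multiset pair $(M,O)$ has the $i$-th row equal to $(t, t, \dotsc, t)$ if $m_i \leq k$ and equal to $(1, 1, \dotsc, 1)$ otherwise, since the column-index restriction $o_j \leq n$ is automatic. Every term of the permanent therefore contributes a factor of $t$ from each of the $s_M \defeq \alpha_1 + \cdots + \alpha_k$ rows with $m_i \leq k$, giving
\begin{equation*}
\perm(A_{M,O}) = |M|! \cdot t^{s_M}.
\end{equation*}
Consequently
\begin{equation*}
R(E^{k,n}(t)) = C_k \cdot t^{d_k}, \qquad d_k \defeq \sum_{i=1}^{r} s_{I_i} - \sum_{j=1}^{q} s_{J_j},
\end{equation*}
where $C_k > 0$ is a constant depending only on the cardinalities $|I_i|$ and $|J_j|$, and $d_k$ equals the difference between the number of elements $\leq k$ in $I_1 \Cup \cdots \Cup I_r$ and the number of elements $\leq k$ in $J_1 \Cup \cdots \Cup J_q$.

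Finally, if $d_k \neq 0$ for some $k$, then letting $t \to \infty$ makes $R$ tend to $\infty$ or $0$ according to the sign of $d_k$, contradicting either the upper bound $k_2$ or the positive lower bound $k_1$. Hence $d_k = 0$ for every $k \in [n-1]$ (and trivially for $k=n$), which is exactly the statement that the two multiset unions $I_1 \Cup \cdots \Cup I_r$ and $J_1 \Cup \cdots \Cup J_q$ have the same count of elements $\leq k$ for every $k$, i.e., they are equal as multisets. Running the same argument with the matrices $E^{n,\ell}(t)$ (which are TNN for every $\ell$ and $t > 0$ by the symmetric analysis) yields the equality $I'_1 \Cup \cdots \Cup I'_r = J'_1 \Cup \cdots \Cup J'_q$ for the column multisets. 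The only subtlety is the density/continuity justification in the first step, but this is routine since the denominator is nowhere zero along the test path, so there is no genuine obstacle.
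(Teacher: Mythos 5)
Your argument is correct, but it takes a different route from the paper's. The paper stays entirely inside $\tp n$: it fixes a genuinely totally positive $A$, scales a single row $i$ by $t$ (which preserves total positivity), and observes that $R(A(t)) = t^{\alpha_i - \alpha_i'}R(A)$, where $\alpha_i$, $\alpha_i'$ are the total multiplicities of $i$ in the numerator and denominator row multisets; letting $t \to 0$ or $\infty$ finishes the claim, and columns are handled symmetrically. You instead evaluate at the degenerate matrices $E^{k,\ell}(t)$ of the multigrading lemma, which forces you through a density-and-continuity step to transfer the bounds from $\tp n$ to these merely totally nonnegative points, and which detects the cumulative counts $d_k$ rather than individual multiplicities. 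Both work; the paper's version is slightly cleaner (no limiting argument, and it isolates the offending index $i$ directly), while yours has the virtue of reusing the exact test family already introduced in Section~\ref{s:multitnnorder}, making the parallel with the monomial lemma explicit. Your computation $\perm(E^{k,n}(t)_{M,O}) = |M|!\, t^{s_M}$ is right, and the continuity justification is sound since all entries of $E^{k,n}(t)$ are positive for $t>0$, so the denominator permanents cannot vanish.

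One small correction: $d_n = 0$ is \emph{not} trivial. Nothing in the hypotheses forces $\sum_i |I_i| = \sum_j |J_j|$, and if the two row multiset unions agree in the multiplicities of $1,\dotsc,n-1$ but not of $n$, then $d_1 = \cdots = d_{n-1} = 0$ while $d_n \neq 0$. You need the case $k=n$, which your own test family supplies at no cost: $E^{n,n}(t)$ is the constant matrix $t$, which is totally nonnegative, and $R(E^{n,n}(t)) = C_n t^{d_n}$ (equivalently, $R(tA) = t^{d_n}R(A)$ by homogeneity). With that half-sentence added, the proof is complete.
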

%\begin{lem} If a ratio $$R=\frac{\text{per}(A_{I_1,I'_1})\text{per}(A_{I_2,I'_2})...\text{per}(A_{I_r,I'_r})}{\text{per}(A_{J_1,J'_1})\text{per}(A_{J_2,J'_2})...\text{per}(A_{J_q,J'_q})}$$
%\end{lem}
%is bounded as a function on totally positive matrices, then $R$ satisfies %ST0 condition.
\begin{proof} Given a multiset $K$, let $\mu_i(K)$ denote the multiplicity of $i$ in $K$, and define
\begin{equation}
    \alpha_i = \sum_{k=1}^r \mu_i(I_k),
    \quad
     \beta_i = \sum_{k=1}^r \mu_i(I'_k),
    \quad
     \alpha'_i = \sum_{k=1}^q \mu_i(J_k),
    \quad
     \beta'_i = \sum_{k=1}^q \mu_i(J'_k).
\end{equation}
Assume that the multiset equalities (\ref{eq:ST0}) do not hold, e.g., for
some $i$ we have $\alpha_i \neq \alpha'_i$. Let $A$ be a totally positive matrix and construct a family of matrices $(A(t))_{t > 0}$ by scaling row $i$ of $A$ by $t$.
%Then, consider a totally positive matrix $A$. Then let $A(t)$ be matrix obtained from the $A$ by scaling the elements in the $i$-th row by a positive $t$. 
Clearly, each matrix $A(t)$ is totally positive, since each minor $\det(A(t)_{I,J})$ equals either $\det(A_{I,J})$ or $t$ times this.
Furthermore, we have $R(A(t))=t^{\alpha_i-\alpha'_i}R(A)$, since each permanent with row multiset $K$ containing $i$ is scaled by $t^{\mu_i(K)}$. %Otherwise, the value of the permanent stays the same. 
Thus, 
%depending on the $\text{sign}(\alpha(i)- \beta(i))$ we 
by letting $t$ approach $0$ or $+\infty$, we can make $R(A(t))$ arbitrarily large or arbitrarily close to $0$.
%which shows that the $R$ is unbounded ratio. 
The same is true if we have
%Similar argument works for the case 
$\beta_i \neq \beta'_i$.
\end{proof}

To state sufficient conditions for the boundedness of
%characterize the 
ratios (\ref{eq:mainratio}) 
%which are indeed bounded, 
we observe that it is possible to bound
the permanent above and below as follows.
%\subsection{Put this somewhere}
%($\star$ Introduce this.)
\begin{prop}\label{p:perminmax}
     For any $n \times n$ totally nonnegative matrix $A = (a_{i,j})$ we have
     \begin{equation}
        a_{1,1} \cdots a_{n,n} \leq \perm(A) \leq n!\cdot a_{1,1} \cdots a_{n,n}. 
    \end{equation}
\end{prop}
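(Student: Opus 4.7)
The plan is to prove the two bounds separately, with the lower bound following from a trivial observation and the upper bound reducing to Theorem~\ref{t:DGS}.

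For the lower bound, I would observe that every entry of a totally nonnegative matrix is nonnegative (being a $1 \times 1$ minor), so every term in the expansion
\begin{equation*}
\perm(A) = \sum_{w \in S_n} a_{1,w_1} \cdots a_{n,w_n}
\end{equation*}
is nonnegative. Since the identity permutation contributes the summand $a_{1,1} \cdots a_{n,n}$, this term alone gives $\perm(A) \geq a_{1,1} \cdots a_{n,n}$.

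For the upper bound, I would apply Theorem~\ref{t:DGS} with $v$ equal to the identity permutation $e$. Since $e$ is the unique Bruhat-minimum of $\sn$, we have $e \leq w$ for every $w \in \sn$, and the theorem then tells us that $x^{e,e} - x^{e,w}$ is a totally nonnegative polynomial for each $w \in \sn$. Evaluating at $A$ yields $a_{1,1} \cdots a_{n,n} \geq a_{1,w_1} \cdots a_{n,w_n}$ for every permutation $w$. Summing this bound over all $n!$ elements of $\sn$ gives
\begin{equation*}
\perm(A) = \sum_{w \in S_n} a_{1,w_1} \cdots a_{n,w_n} \leq \sum_{w \in S_n} a_{1,1} \cdots a_{n,n} = n! \cdot a_{1,1} \cdots a_{n,n}.
\end{equation*}

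There is no substantial obstacle here: the only nontrivial ingredient is Theorem~\ref{t:DGS}, which is already established. The proof is essentially a one-line application once one recognizes that the diagonal monomial dominates every other monomial in the permanent expansion on totally nonnegative matrices, and that the permanent is a sum of exactly $n!$ such monomials.
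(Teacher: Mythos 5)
Your proof is correct and takes essentially the same route as the paper: the lower bound comes from the nonnegativity of every term in the permanent expansion together with the identity-permutation summand, and the upper bound comes from Theorem~\ref{t:DGS} applied with $v = e$, which shows the diagonal monomial dominates each $a_{1,w_1} \cdots a_{n,w_n}$.
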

\begin{proof}
    The first inequality follows from the fact that $\permmon aw > 0$ for all $w \in \sn$.  The second inequality follows from the fact (Theorem~\ref{t:DGS}) that
    $a_{1,1} \cdots a_{n,n} \leq \permmon aw$ for all $w \in \sn$.
   % (Thm.\,\ref{t:DGS})
\end{proof}
% \subsection{The original material}
% Let $\alpha$ be a sequence of row index sets and $\alpha'$ be a sequence of column index sets in the numerator of a ratio. Similarly, let $\beta$ be a sequence of row index sets and $\beta'$ be a sequence of column index sets in the denominator of a ratio. 

% \begin{defn} [\cite{boocher2008generators}]
% For $i\in\{1,2,\ldots,n\}$, let
% $f_\alpha(i)$ be the number of index sets in $\alpha$ that contain
% $i$.  
% %\begin{defn}[ST0 Condition]
% If
% $f_\alpha(i)=f_\beta(i)$ and $f_{\alpha'}(i)=f_{\beta'}(i)$ for all $i$, we say the ratio satisfies the ST0 condition.
% \end{defn}

% ($\star$ Mention somewhere that ratios are always bounded below by $0$ and that by "bounded" we mean "bounded above".)

% ($\star$ Define multiset union and give an example.)

% In the next lemma we show that ST0 condition is necessary condition for a ratio (\ref{eq:R}) to be bounded.

% The following theorem shows that the ST0 condition is also the sufficient condition for a ratio involving permanents of principal submatrices only.

%\section{Some progress with general case}

%($\star$ Introduce this.)
Now we state our main result, which characterizes ratios $R(x)$ as in (\ref{eq:mainratio}) which are bounded above for $x \in \tp n$.
\begin{thm}\label{t:main}
Let rational function
\begin{equation}\label{eq:mainratiox}
    R(x) = \frac{\perm(x_{I_1,I'_1})\perm(x_{I_2,I'_2})\cdots\perm(x_{I_r,I'_r})}{\perm(x_{J_1,J'_1})\perm(x_{J_2,J'_2})\cdots\perm(x_{J_q,J'_q})}
\end{equation}    
have index sets which satisfy (\ref{eq:ST0}), and define
%\begin{equation}\label{eq:mainST0}
%\begin{gathered}
%    M \defeq I_1 \Cup \cdots \Cup I_r = J_1 \Cup \cdots \Cup J_q,
%    \qquad
%    O \defeq I'_1 \Cup \cdots \Cup I'_r = J'_1 \Cup \cdots \Cup J'_q,\\
%    m = |I_1| + \cdots + |I_r| = |J_1| + \cdots + |J_q|.
%    \end{gathered}
%\end{equation}
%Define permutations $v,w,u,y \in \mfs{m}$ by (something like)
%Define 
matrices $C = (c_{i,j})$, $C^* = (c^*_{i,j})$, $D = (d_{i,j})$, $D^* = (d^*_{i,j})$ by
\begin{equation}\label{eq:CandD}
(x_{I_1,I'_1})^{e,e} \cdots (x_{I_r,I'_r})^{e,e} = \prod x_{i,j}^{c_{i,j}},
\qquad
(x_{J_1,J'_1})^{e,e} \cdots (x_{J_q,J'_q})^{e,e} = \prod x_{i,j}^{d_{i,j}},
\end{equation}
%and define matrices $C^*$, $D^*$ as in 
and (\ref{eq:cstar}).
Then $R(x)$ is bounded above
on the set of totally positive matrices
%above 
%and nontrivially bounded below (or just say "bounded"?) 
if and only if 
%we have 
$C^* \leq D^*$
in the componentwise order.
%$w(D)$ and $w(C)$ are comparable 
%in ($\star$ what symmetric group or what Bruhat order on double cosets?).
%$u^{-1}y \leq v^{-1}y$.
%In this case, bounds are
In this case, it is bounded above by
%for all $n \times n$ totally positive matrices $A$ we have
%\begin{equation}%\label{eq:mainupperbound}
%\begin{cases}
%\frac{1}{|J_1|! \cdots |J_q|!} \;\leq\; 
%R(A) \;\leq\; 
$|I_1|! \cdots |I_r|!\;$.
%n   &\text{if $w(D) \leq w(C)$},\\
%    \frac{1}{|I_1|! \cdots |I_q|!} \leq R \leq |J_1|! \cdots |J_r|! 
%    &\text{if $w(C) \leq w(D)$}.
%    \end{cases}
%\end{equation}
\end{thm}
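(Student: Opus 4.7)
The plan is to prove both implications by applying Proposition~\ref{p:perminmax} factor-by-factor, thereby reducing each inequality about a ratio of permanents to an inequality about a ratio of monomials which is already controlled by the main characterization theorem of Section~\ref{s:multitnnorder}.

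For the ``if'' direction, I would assume $C^* \leq D^*$ and fix any $A \in \tp n$. Applying the upper bound of Proposition~\ref{p:perminmax} to each $\perm(A_{I_k, I'_k})$ and the lower bound to each $\perm(A_{J_k, J'_k})$, and noting that by (\ref{eq:CandD}) the products of the diagonal monomials $(A_{I_k,I'_k})^{e,e}$ assemble into $\prod a_{i,j}^{c_{i,j}}$ and the $(A_{J_k,J'_k})^{e,e}$ assemble into $\prod a_{i,j}^{d_{i,j}}$, yields
\begin{equation*}
R(A) \; \leq \; |I_1|!\cdots|I_r|! \cdot \frac{\prod a_{i,j}^{c_{i,j}}}{\prod a_{i,j}^{d_{i,j}}}.
\end{equation*}
The multiset equalities (\ref{eq:ST0}) force these two monomials to lie in a common multigraded component of $\zx$, so the main characterization theorem of Section~\ref{s:multitnnorder} (equivalence of $C^* \leq D^*$ with total nonnegativity of $\prod x_{i,j}^{d_{i,j}}-\prod x_{i,j}^{c_{i,j}}$) applies, giving $\prod a_{i,j}^{c_{i,j}} \leq \prod a_{i,j}^{d_{i,j}}$. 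The desired bound $R(A) \leq |I_1|!\cdots|I_r|!$ follows immediately.

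For the ``only if'' direction I would argue the contrapositive: suppose $c^*_{k,\ell} > d^*_{k,\ell}$ for some $(k,\ell)$. Evaluating at the test matrix $E^{k,\ell}(t)$ of Section~\ref{s:multitnnorder} (totally nonnegative for $t \geq 1$), a direct count of how many factors of each monomial land in the ``$t$-block'' shows $\prod x_{i,j}^{c_{i,j}}$ takes the value $t^{c^*_{k,\ell}}$ and $\prod x_{i,j}^{d_{i,j}}$ takes $t^{d^*_{k,\ell}}$. Now applying Proposition~\ref{p:perminmax} in the reverse direction (lower bound on each numerator permanent, upper bound on each denominator permanent) produces
\begin{equation*}
R(E^{k,\ell}(t)) \; \geq \; \frac{t^{c^*_{k,\ell}-d^*_{k,\ell}}}{|J_1|!\cdots|J_q|!},
\end{equation*}
which tends to $+\infty$ as $t \to \infty$.

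The main (mild) obstacle is that $E^{k,\ell}(t)$ typically lies only in $\tnn n$, not in $\tp n$. To finish I would invoke the density of $\tp n$ in $\tnn n$ recorded in Section~\ref{s:intro}: since every permanent of a matrix with strictly positive entries is strictly positive, the rational function $R$ is continuous with nonvanishing denominator in a neighborhood of $E^{k,\ell}(t)$, so perturbing to a nearby totally positive matrix produces values of $R$ on $\tp n$ arbitrarily close to $R(E^{k,\ell}(t))$, and hence arbitrarily large. This contradicts boundedness and completes the characterization; the numerical bound $|I_1|!\cdots|I_r|!$ in the ``if'' direction was already established above.
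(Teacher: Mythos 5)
Your proposal is correct and follows essentially the same route as the paper: the upper bound comes from applying Proposition~\ref{p:perminmax} factor-by-factor and then invoking the monomial characterization ($C^*\leq D^*$) from Section~\ref{s:multitnnorder}, while unboundedness when $c^*_{k,\ell}>d^*_{k,\ell}$ is exhibited on the same block test matrix $E^{k,\ell}(t)$ as $t\to\infty$. Your added perturbation step passing from the merely totally nonnegative matrix $E^{k,\ell}(t)$ to nearby matrices in $\tp n$ is a point the paper's proof leaves implicit, and is a worthwhile refinement.
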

\begin{proof}
Suppose that $C^* \nleq D^*$.
%$w(C)$ and $w(D)$ are incomparable in the Bruhat order (on what?). 
Then for some indices $(k,\ell)$ we have $c^*_{k,\ell} > d^*_{k,\ell}$.
Define the matrix $B(t) = (b_{i,j}(t))$ by
\begin{equation*}
    b_{i,j}(t) = \begin{cases}
        t &\text{if $i \leq k$ and $j \leq \ell$},\\
        1 &\text{otherwise}.
    \end{cases}
\end{equation*}   
Now, we have $R(B(t))=\frac{p(t)}{q(t)}$ where $\deg (p(t))=c^*_{i,j}>d^*_{i,j}=\deg (q(t))$.
Thus we have \begin{equation*}
    \lim_{t \rightarrow \infty} R(B(t)) = t^{c^*_{i,j}-d^*_{i,j}}= \infty.
    \end{equation*}

% ($\star$ Now explain what $R(B(t))$ is, and why we have
% \begin{equation*}
%     \lim_{t \rightarrow \infty} R(B(t)) = \infty.)
%     \end{equation*}

Assume therefore that we have $C^* \leq D^*$ and let $A$ be any $n \times n$ totally positive matrix.
Applying the inequalities of 
Proposition~\ref{p:perminmax} to the numerator and denominator of $R(A)$
respectively, we see that $R(A)$ is at most
\begin{equation}
%\begin{aligned}
%    R &\leq
%    \frac{|I_1|!(A_{I_1,I'_1})^{e,e} \cdots |I_r|!(A_{I_r,I'_r})^{e,e}}
%    {\perm(A_{J_1,J'_1}) \cdots \perm(A_{J_q,J'_q})}
%    \leq 
\frac{|I_1|!(A_{I_1,I'_1})^{e,e} \cdots |I_r|!(A_{I_r,I'_r})^{e,e}}{(A_{J_1,J'_1})^{e,e} \cdots (A_{J_q,J'_q})^{e,e}} 
    = \frac{|I_1|! \cdots |I_r|! \prod a_{i,j}^{c_{i,j}}}{\prod a_{i,j}^{d_{i,j}}}.
    \end{equation}
    By (Bruhat result), this is at most $|I_1|! \cdots |I_r|!\ $.

\end{proof}
Observe that Theorem~\ref{t:main} guarantees no nontrivial lower bound for $R(x)$ and gives an upper bound which is sometimes tight.
Indeed the ratio 
\begin{equation*}
    \frac{x_{1,2} x_{2,1}}{x_{1,1}x_{2,2}}
\end{equation*}
attains all values in the open interval $(0,1)$ as $x$ varies over matrices in $\tp 2$. On the other hand, special cases of the ratios in Theorem~\ref{t:main} can be shown to have both upper and nontrivial lower bounds.
\begin{cor}\label{cor:bothbounds}
For ratio $R(x)$ and matrices $C$, $D$ defined as in Theorem~\ref{t:main},
if $C = D$, then $R(x)$ is bounded above and below by
\begin{equation}\label{eq:corupperbounds}
    \frac{1}{|J_1|! \cdots |J_q|!} \leq R(x) \leq |I_1|! \cdots |I_r|! \;,
\end{equation}
for $x \in \tp n$.
\end{cor}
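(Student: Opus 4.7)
The plan is to derive both bounds directly from Theorem~\ref{t:main} by exploiting the symmetry in the hypothesis $C = D$. Since $C = D$ implies $C^* = D^*$, the hypothesis $C^* \leq D^*$ of Theorem~\ref{t:main} is satisfied, so the upper bound $R(x) \leq |I_1|! \cdots |I_r|!$ is immediate.

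For the lower bound, the idea is to apply Theorem~\ref{t:main} a second time to the reciprocal
\begin{equation*}
  R(x)^{-1} = \frac{\perm(x_{J_1,J'_1}) \cdots \perm(x_{J_q,J'_q})}
                      {\perm(x_{I_1,I'_1}) \cdots \perm(x_{I_r,I'_r})},
\end{equation*}
viewed as a ratio of the same form as (\ref{eq:mainratiox}) but with the roles of the $I$-sets and $J$-sets exchanged. The index sets of $R(x)^{-1}$ still satisfy the multiset equalities (\ref{eq:ST0}), and the matrices associated to its numerator and denominator (via (\ref{eq:CandD})) are $D$ and $C$ respectively. Since $C = D$ by hypothesis, the componentwise inequality $D^* \leq C^*$ holds (trivially, with equality), so Theorem~\ref{t:main} applies to $R(x)^{-1}$ and yields the upper bound $R(x)^{-1} \leq |J_1|! \cdots |J_q|!$ for all $x \in \tp n$. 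Inverting this gives the asserted lower bound $R(x) \geq 1/(|J_1|! \cdots |J_q|!)$.

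Since both steps are immediate consequences of the already-proved Theorem~\ref{t:main}, there is no significant obstacle; the only thing to verify carefully is that the reciprocal $R(x)^{-1}$ indeed satisfies the hypotheses of Theorem~\ref{t:main} under the assumption $C = D$, which follows because swapping numerator and denominator swaps the roles of $C$ and $D$, leaving the equality $C = D$ invariant.
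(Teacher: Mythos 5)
Your proof is correct, and it matches the argument the paper intends (the corollary is stated without an explicit proof, being an immediate consequence of Theorem~\ref{t:main}): since $C=D$ gives $C^*\leq D^*$ and $D^*\leq C^*$ simultaneously, applying the theorem to both $R(x)$ and its reciprocal yields the two bounds. Your check that $R(x)^{-1}$ satisfies the hypotheses --- the multiset equalities (\ref{eq:ST0}) are symmetric in the $I$- and $J$-sets, and swapping numerator and denominator swaps $C$ and $D$ --- is exactly the point that needs verifying, so nothing is missing.
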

% ($\star$ Comment in plain English that for principal submatrices the ST0 condition
% is equivalent to the existence of upper and lower bounds.)
For example, consider the ratio
\begin{equation}
    R(x) = \frac{\perm(x_{12,34})\perm(x_{34,12})}
    {x_{1,3}x_{2,4}x_{3,1}x_{4,2}}
\end{equation}
with
$|I_1| = |I_2| = 2$, $|J_1| = |J_2| = |J_3| = |J_4| = 1$, and
\begin{equation}
    C = D = \begin{bmatrix} 
    0 & 0 & 1 & 0 \\ 
    0 & 0 & 0 & 1 \\
    1 & 0 & 0 & 0 \\
    0 & 1 & 0 & 0
    \end{bmatrix}.
\end{equation}
By Corollary~\ref{cor:bothbounds}, we have
\begin{equation*}
    \frac{1}{1!^4} \leq R(x) \leq 2!^2.
    \end{equation*}
    It is easy to see that $R(x)$ attains values arbitrarily close to $4$ as $x$ approaches the matrix of all ones.
    %the upper bound is tight.  
%    ($\star$ What about the lower bound?)
It is also possible to show that $R(x)$
%$ = 1 - 2\epsilon + \epsilon^2$ 
attains values arbitrarily close to $1$.  Indeed, consider the matrix $A = A(\epsilon) = (a_{i,j})$ defined by
\begin{equation}
    A(\epsilon) = \begin{bmatrix}
        1 & 1 & \epsilon & \epsilon^3 \\
    1 & 2 & 1 & \epsilon \\
    \epsilon & 1 & 2 & 1 \\
    \epsilon^3 & \epsilon & 1 & 1
    \end{bmatrix},
\end{equation}%\begin{equation*}
%\end{equation*}
where $\epsilon$ is positive and close to $0$.
To see that $A(\epsilon)$ is totally positive, 
%for small $\epsilon > 0$, 
it suffices to verify the positivity of the
sixteen minors $\det(A_{[a_1,b_1],[a_2,b_2]})$ indexed by pairs of 
%p sets are 
intervals, at least one of which contains $1$~\cite[Thm.\,9]{FominTPTest}.
Observe that we have
$a_{1,j} > 0$ and $a_{i,1} > 0$ for all $i,j$.  Also,
\begin{equation*}
\begin{aligned}
    &\det(A_{12,12}) = 1,\\
    &\det(A_{12,23}) = \det(A_{23,12}) = 1 - 2\epsilon,\\
    &\det(A_{12,34}) = \det(A_{34,12}) = \epsilon^2 - \epsilon^3,\\
    &\det(A_{123,123}) = 1 + 2 \epsilon - 2 \epsilon^2,\\
    &\det(A_{123,234}) = \det(A_{234,123}) = 1 - 4\epsilon + \epsilon^2 + 3\epsilon^3,\\
    &\det(A) = 
%\det(x(\epsilon)) = \det(x_{234,234})-\det(x_{234,134})+\epsilon \det(x_{234,124})-\epsilon^3 \det(x_{234,123})=$$
4\epsilon-6\epsilon^2-2\epsilon^3+9\epsilon^4-2\epsilon^5-3\epsilon^6.    
    \end{aligned}
\end{equation*}
It follows that 
%Now observe that 
we have
\begin{equation*}
    \lim_{\epsilon \rightarrow 0^+} R(A) = 
    \lim_{\epsilon \rightarrow 0^+}
    \frac{(\epsilon^2 - \epsilon^3)^2}{\epsilon^4} = 
\lim_{\epsilon \rightarrow 0^+} 1 - 2\epsilon + \epsilon^2 = 1.
    \end{equation*}
    
%\end{equation*}
%$\epsilon$ approaches %$0^{+}$,

In the case that all submatrices in (\ref{eq:mainratiox}) are principal, the necessary condition (\ref{eq:ST0}) for boundedness is in fact sufficient to guarantee the existence of upper and nontrivial lower bounds.
\begin{cor}\label{cor:principalbounds}
For ratio $R$ as in Theorem~\ref{t:main},
if all submatrices in $R(x)$ are principal, ($I_k = I'_k$, $J_k = J'_k$ for all $k$), then
$R(x)$ is bounded above and below as in (\ref{eq:corupperbounds}).
%\begin{equation*}
%    \frac{1}{|J_1|! \cdots |J_q|!} \leq R \leq |I_1|! \cdots |I_r|! \;.
%\end{equation*}
\end{cor}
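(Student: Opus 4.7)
The plan is to reduce this corollary immediately to Corollary~\ref{cor:bothbounds} by showing that, under the principal submatrix hypothesis together with the multiset conditions (\ref{eq:ST0}) inherited from Theorem~\ref{t:main}, the matrices $C$ and $D$ defined in (\ref{eq:CandD}) must coincide.

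The key observation is that when $I_k = I'_k$, the monomial $(x_{I_k,I_k})^{e,e}$ is the product of the diagonal entries of the principal generalized submatrix $x_{I_k,I_k}$. Unpacking the definitions of $x_{M,O}$ and of $x^{v,w}$, this gives
\begin{equation*}
(x_{I_k,I_k})^{e,e} = \prod_{i \in [n]} x_{i,i}^{\mu_i(I_k)},
\end{equation*}
where $\mu_i(I_k)$ denotes the multiplicity of $i$ in $I_k$. Multiplying over $k$, and applying the same calculation to the denominator where $J_k = J'_k$, one finds that the matrices $C$ and $D$ from (\ref{eq:CandD}) are diagonal, with diagonal entries
\begin{equation*}
\alpha_i := \sum_{k=1}^{r} \mu_i(I_k) \quad \text{and} \quad \alpha'_i := \sum_{k=1}^{q} \mu_i(J_k),
\end{equation*}
respectively, in the notation of the proof of Proposition~\ref{p:ST0necc}.

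The first multiset equality in (\ref{eq:ST0}) is precisely $\alpha_i = \alpha'_i$ for every $i \in [n]$, so $C = D$, and Corollary~\ref{cor:bothbounds} immediately yields the bounds (\ref{eq:corupperbounds}). There is no real obstacle beyond this bookkeeping: the column equality in (\ref{eq:ST0}) becomes automatic once rows coincide with columns, and all of the substantive content has already been absorbed into Corollary~\ref{cor:bothbounds} through Proposition~\ref{p:perminmax} and Theorem~\ref{t:DGS}.
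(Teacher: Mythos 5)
Your proposal is correct and follows essentially the same route as the paper: both arguments observe that for principal submatrices $(x_{I_k,I_k})^{e,e}$ is a product of diagonal variables, so $C$ and $D$ are diagonal with entries given by the multiplicities in the multiset unions, whence the hypothesis (\ref{eq:ST0}) forces $C = D$ and Corollary~\ref{cor:bothbounds} applies. Your write-up is slightly more explicit about the bookkeeping (and correctly notes that the column condition is automatic), but there is no substantive difference.
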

\begin{proof}
%    When all submatrices in the ratio $R$ are  
%(Explain why?)   
For principal submatrices $x_{I_1,I_1},\dotsc$,
    the condition (\ref{eq:ST0}) implies the equality of the matrices $C$ and $D$ in (\ref{eq:CandD}):
    this matrix is diagonal with $(i,i)$ entry equal to the multiplicity of $i$ in $I_1 \multiu \cdots \multiu I_r$.
\end{proof}

For example, consider the ratio 
%(\ref{eq:kotelratio})
\begin{equation}
%\frac1{|I \cup J|! ~| I \cap J|!}
%\leq
\frac{\perm(x_{I,I})\;\perm(x_{J,J})}
{\perm(x_{I\cup J, I\cup J})\;\perm(x_{I\cap J, I\cap J})}
%\leq
%|I|!~|J|!
%\end{equation}
\end{equation}
coming from the (false) permanental version (\ref{eq:kotelperm}) of Koteljanskii's inequality (\ref{eq:koteljanskii}).
By Corollary~\ref{cor:principalbounds}, the four principal submatrices of $x$ imply that the exponent matrices $C$ and $D$ are equal and diagonal with $(i,i)$ entry equal to the multiplicity of $i$ in $I \multiu J$. Thus Corollary~\ref{cor:bothbounds} gives the lower and upper bounds
\begin{equation}
    \frac 1{|I\cup J|!\; |I \cap J |!}, \qquad
    |I|! \; |J|!
\end{equation}
as claimed in (\ref{eq:kotelratio}).  These bounds are not in general tight. Consider the special case
\begin{equation}\label{eq:worsebounds}
\frac{1}{3!1!}\leq
\frac{\perm(x_{12,12})\perm(x_{23,23})}{\perm(x_{123,123})\perm(x_{2,2})}
\leq (2!)^2
\end{equation}
with 
\begin{equation}
    C = D = \begin{bmatrix}
        1 & 0 & 0 \\ 0 & 2 & 0 \\ 0 & 0 & 1 
    \end{bmatrix}.
\end{equation}
%multiset unions (\ref{eq:ST0}) equal to %$1223$. 
We improve (\ref{eq:worsebounds}) as follows.
%By Corollary~\ref{cor:bothbounds} we have
%If the index sets in (\ref{eq:mainratio}) do not satisfy (\ref{eq:mainST0}).  Then by Proposition~\ref{p:unbounded}, the ratio is
%not bounded above or below.
%Suppose therefore that the index sets do satisfy (\ref{eq:mainST0}).
%    $\star$ Do this.
%\end{proof}

%($\star$ Mention that the stated bounds are not tight. Use the fact that every TNN matrices is the limit of a sequence of TNN matrices.)

%($\star$  Do some examples with bounds guaranteed by the theorem,
%and with best possible bounds computed by other arguments such as
%the subtraction-free property.)
\begin{prop}
For $x \in \tp 3$ we have
\begin{equation}\label{eq:half2}\frac{1}{2}\leq\frac{\perm(x_{12,12})\perm(x_{23,23})}{\perm(x_{123,123})\perm(x_{2,2})}\leq 2.
\end{equation}
\end{prop}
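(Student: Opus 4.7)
My plan is to expand the permanents in $R(x)$ as explicit polynomials in the entries $x_{i,j}$ and exhibit each of the two differences
\begin{equation*}
\Delta_+ \defeq 2\cdot\perm(x_{123,123})\,x_{2,2} - \perm(x_{12,12})\perm(x_{23,23}),
\quad
\Delta_- \defeq 2\cdot\perm(x_{12,12})\perm(x_{23,23}) - \perm(x_{123,123})\,x_{2,2},
\end{equation*}
as $\mathbb Z_{\geq 0}$-linear combinations of products of positive monomials in the $x_{i,j}$ with $2\times 2$ minors of $x$. Since on $\tp 3$ every entry and every $2\times 2$ minor of $x$ is strictly positive, this yields $\Delta_+ \geq 0$ (the upper bound) and $\Delta_- \geq 0$ (the lower bound) simultaneously.

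Direct expansion produces exactly seven distinct degree-$4$ monomials. Writing
\begin{gather*}
A = x_{1,1}x_{2,2}^2x_{3,3},\quad B = x_{1,1}x_{2,2}x_{2,3}x_{3,2},\quad C = x_{1,2}x_{2,1}x_{2,2}x_{3,3},\quad E = x_{1,2}x_{2,1}x_{2,3}x_{3,2},\\
F = x_{1,2}x_{2,2}x_{2,3}x_{3,1},\quad G = x_{1,3}x_{2,1}x_{2,2}x_{3,2},\quad H = x_{1,3}x_{2,2}^2x_{3,1},
\end{gather*}
one has $\perm(x_{12,12})\perm(x_{23,23}) = A+B+C+E$ and $\perm(x_{123,123})\,x_{2,2} = A+B+C+F+G+H$, so that
\begin{equation*}
\Delta_+ = (A-E) + B + C + 2F + 2G + 2H, \qquad \Delta_- = (A-H) + (B-G) + (C-F) + 2E.
\end{equation*}

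The crux is then the four factorizations
\begin{equation*}
A - E = \det(x_{12,12})\,x_{2,2}x_{3,3} + x_{1,2}x_{2,1}\det(x_{23,23}),
\end{equation*}
\begin{equation*}
A - H = x_{2,2}^2\det(x_{13,13}),\quad B - G = x_{2,2}x_{3,2}\det(x_{12,13}),\quad C - F = x_{1,2}x_{2,2}\det(x_{23,13}),
\end{equation*}
each a routine calculation. The only nontrivial step is discovering the correct pairings of positive with negative monomials so that each pair differs by a single transposition of a row or column index and thus factors through a single $2\times 2$ minor; for $n=3$ the small number of monomials makes this essentially forced, and I anticipate no significant obstacle.
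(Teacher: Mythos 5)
Your proposal is correct — the expansions, the grouping into the seven monomials $A,B,C,E,F,G,H$, and all four minor factorizations check out — and it is essentially the same argument as the paper's, which also proves both bounds by expanding $\Delta_\pm$ and rewriting them as nonnegative combinations of monomials and products of $2\times 2$ minors with monomials (the paper pairs $E-G$ where you pair $B-G$, and keeps $A$ whole where you split off $A-E$, but these are immaterial variations of the same decomposition).
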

\begin{proof}
 The first inequality follows from expanding
 $$2\cdot\perm(x_{12,12})\perm(x_{23,23})-\perm(x_{123,123})\perm(x_{2,2})$$ and grouping terms as
\begin{equation*}
    \begin{aligned}
    &(x_{11}x^2_{22}x_{33}-x_{13}x^2_{22}x_{31}) +(x_{12}x_{21}x_{22}x_{33}-x_{12}x_{22}x_{23}x_{31})
    + (x_{12}x_{21}x_{23}x_{32}-x_{13}x_{21}x_{22}x_{32})\\
    &\quad + x_{11}x_{22}x_{23}x_{32}+x_{12}x_{21}x_{23}x_{32}\\
&= \det(x_{13,13})x^2_{22}+\det(x_{23,13})x_{12}x_{22}+\det(x_{12,23})x_{21}x_{32}+x_{11}x_{22}x_{23}x_{32}+x_{12}x_{21}x_{23}x_{32}.
\end{aligned}
\end{equation*}
Similarly, the second inequality follows from expanding
$$2\cdot\perm(x_{123,123})\perm(x_{2,2})-\perm(x_{12,12})\perm(x_{23,23})$$
and grouping terms as
\begin{equation*}
%    \begin{aligned}
%2 \perm(x_{123,123})\perm(x_{2,2}) - \perm(x_{12,12})\perm(x_{23,23})\\
%&=
x_{11}x^2_{22}x_{33}+\det(x_{23,23})x_{12}x_{21}+2x_{12}x_{22}x_{23}x_{31}+x_{11}x_{22}x_{23}x_{32}+2x_{13}x_{21}x_{22}x_{32}+2x_{13}x^2_{22}x_{31}.
%\end{aligned}
\end{equation*}
\end{proof}

%According to Corollary~(\ref{cor:bothbounds}) a ratio below %$\frac{\perm(A_{12|12})\perm(A_{13|13})}{\perm(A_{123|123})\perm(A_{2|2})}$ 
%has the following lower and upper bounds

%This bounds can be improved. The following two inequalities 

%suggest tighter lower and upper bounds 
%$\star$ Can we say whether the bounds above are tight?
The authors believe that even these bounds are not tight.  The smallest and greatest values for (\ref{eq:half2}) that we have found are $2/3$ and $121/114$, respectively.
%\end{obs}

\section{Future directions}\label{s:open}
It would be interesting to characterize the ratios (\ref{eq:R}) which are bounded by $1$, i.e., to solve the following problem.
\bp 
Characterize the differences
\begin{equation}\label{eq:perprodminusperprod}
\perm(x_{J_1,J'_1}) \cdots \perm(x_{J_q,J'_q}) - 
    \perm(x_{I_1,I'_1}) \cdots \perm(x_{I_r,I'_r})
\end{equation}
which are totally nonnegative polynomials.
\ep
%By Proposition It seems that boundaries in the corollary could be significantly improved. 
%For example, 
To consider a special case, it is possible to show that for small $n$, the sets $I = [2n] \ssm 2\mathbb Z$, $J = [2n] \cap 2 \mathbb Z$
define a totally nonnegative
polynomial
\begin{equation}\label{eq:sihong}
\perm(x_{[n],[n]})\; \perm(x_{[n+1,2n],[n+1,2n]}) - 
\perm(x_{I,I})\; \perm(x_{J,J}).
\end{equation}
If this polynomial is totally nonnegative in general, then
it provides a permanental analog of the known totally nonnegative polynomial
\begin{equation*}
    \det(x_{I,I}) \det(x_{J,J}) - 
    \det(x_{[n,n]})
    \det(x_{[n+1,2n],[n+1,2n]}).
\end{equation*}
%
%\perm(x_{\{1,2\},\{1,2\}})
%\perm(x_{\{3,4\},\{3,4\}}) - 
%    \perm(x_{\{1,3\},\{1,3\}}) \perm(x_{\{2,4\},\{2,4\}})
%\end{equation*}
%the following inequalities hold for small values of $n$.   
%Thus it is natural to conjecture the following.
%More generally, we have the following.
\begin{conj}
%Show that 
The polynomial (\ref{eq:sihong}) is totally nonnegative for all 
%positive 
$n$.
%For any $2n \times 2n$ totally positive matrix $A$:
%$$    
%\perm(A_{[1,3,5...2n-1],[1,3,5...2n-1]})\perm(A_{[2,4,6...2n],[2,4,6...2n]}) \leq $$ 
%$$\leq %\perm(A_{[1,2,3...n],[1,2,3...n]})\perm(A_{[n+1,n+2,n+3...2n],[n+1,n+2,n+3...2n]})
%$$
\end{conj}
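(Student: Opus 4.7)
My plan is to expand both products of permanents as sums of monomials in $\mathcal{A}_{[2n],[2n]}$ and then apply the Bruhat-order characterization of totally nonnegative monomial differences (the main theorem of Section~\ref{s:multitnnorder}) term by term. After expansion, the left-hand side is a sum of $(n!)^2$ distinct monomials $(x_{[2n],[2n]})^{e,\psi}$ with $\psi \in \mfs{2n}$ \emph{half-preserving} (meaning $\psi([n]) = [n]$), and the right-hand side is a sum of $(n!)^2$ distinct monomials $(x_{[2n],[2n]})^{e,\phi}$ with $\phi \in \mfs{2n}$ \emph{parity-preserving} (meaning $\phi(I) = I$ and $\phi(J) = J$). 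Both sets are naturally parameterized by $\mfs{n} \times \mfs{n}$, via the pair of restrictions of $\psi$ (resp.\ $\phi$) to the two halves (resp.\ the two parities).

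I would then try to construct a bijection $\iota$ from parity-preserving to half-preserving permutations such that $\iota(\phi) \leq \phi$ in the Bruhat order on $\mfs{2n}$ for every $\phi$. Given such a bijection, each pairwise difference $(x_{[2n],[2n]})^{e,\iota(\phi)} - (x_{[2n],[2n]})^{e,\phi}$ would be totally nonnegative by the Bruhat criterion, and summing would yield the conjectured inequality. The natural candidate for $\iota$---the identity through the shared parameterization by $\mfs{n} \times \mfs{n}$, i.e., rescale odd and even labels to $[n]$---does produce Bruhat-comparable pairs for small $n$, which is consistent with the paper's remark that the conjecture has been verified in those cases.

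The main obstacle is that this natural bijection fails to respect the Bruhat order for general $n$. For instance, when $n = 10$ with $\tilde\pi \in \mfs{n}$ the reversal and $\tilde\rho \in \mfs{n}$ the identity, the resulting $\psi$ has $(10,9,8,7)$ in positions $1$--$4$ while $\phi$ has $(19,2,17,4)$; the Ehresmann tableau criterion fails at the first position ($7 \not\leq 2$), so $\psi \not\leq \phi$. A more delicate matching is therefore needed. I would pursue two backup strategies. First, apply Hall's marriage theorem to the bipartite graph whose edges are the Bruhat-comparable (parity-preserving, half-preserving) pairs, reducing the problem to a Hall-type subset condition. Second, exploit a planar network realization of $x \in \tp{2n}$, in which $\perm(x_{S,T})$ is the weighted count of (possibly intersecting) path families from $S$ to $T$, and attempt a weight-preserving injection from parity-indexed pairs of path families to half-indexed pairs via sequences of path-swap operations.

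If no termwise matching succeeds, a last resort is to decompose the difference directly as a sum of products $\det(x_{K,L}) \cdot (\text{monomial})$ with $|K| = |L| = 2$, as in part~(iv) of the main theorem of Section~\ref{s:multitnnorder}. This is the most delicate route, however: unlike in the determinantal analog, the permanent offers no sign cancellation to organize the combinatorics, so the correct sequence of $2 \times 2$ minor moves must be discovered from scratch, and it is precisely this step---finding a uniform combinatorial recipe that works for all $n$---that I expect to be the crux of the argument.
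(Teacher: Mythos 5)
This statement is an open conjecture in the paper: the authors verify it only for small $n$ and offer no general proof, so there is no argument of theirs to compare yours against. Your proposal, by your own account, is a plan rather than a proof, and the gap is exactly the crux of the problem. The setup is correct: expanding both products, the minuend is the sum of the $(n!)^2$ monomials $(x_{[2n],[2n]})^{e,\psi}$ over half-preserving $\psi$, the subtrahend the sum over parity-preserving $\phi$, and a bijection $\iota$ with $\iota(\phi)\leq\phi$ in Bruhat order would finish the argument via Theorem~\ref{t:DGS}. Your computation that the natural $\mfs n\times\mfs n$-equivariant bijection fails (the $n=10$ reversal example, where the sorted $4$-prefixes $(7,8,9,10)$ and $(2,4,17,19)$ violate the Ehresmann criterion) is right, and it is a genuinely useful negative observation. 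But none of the three fallback routes is executed: you do not verify Hall's condition for the Bruhat-comparability bipartite graph, you do not construct the path-swapping injection on planar networks, and you do not exhibit the decomposition into products $\det(x_{K,L})\cdot(\text{monomial})$. Each of these is where the actual mathematics would have to happen.

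Two further cautions. First, a termwise Bruhat matching is sufficient but not necessary for total nonnegativity of the difference, so even a proof that Hall's condition fails would not refute the conjecture; conversely, the conjecture could hold for reasons invisible to any monomial-by-monomial pairing, in which case your primary strategy is structurally incapable of succeeding. Second, the planar-network route needs care: for a totally \emph{positive} (rather than merely nonnegative) matrix the standard acyclic-network parametrization gives $\perm(x_{S,T})$ as a weighted count of not-necessarily-disjoint path families, but the desired injection must be weight-preserving globally across both factors of each product, which is a substantially harder combinatorial problem than the Lindstr\"om-type arguments used for minors. As it stands the proposal identifies the right landscape but proves nothing beyond what the paper already asserts.
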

%The inequality above for $4 \times 4$ matrix $A$ is `subtraction-free' in terms of positive weights of the planar network parameterization (see \cite{fomin2000total} for definitions).  
Other families of possible inequalities are suggested by the known inequalities appearing in
%Some experimentation suggests that we can extend the second inequality in 
Proposition~\ref{p:perminmax}.
In particular,
the first inequality there compares the permanent to a product of permanents of $1 \times 1$ matrices.  Comparing further to products of permanents of the form
\begin{equation}\label{eq:permprodgen}
\perm(x_{I_1,I_1}) \cdots \perm(x_{I_r,I_r}),
\end{equation}
we obtain polynomials
of the forms
\begin{equation}
    \perm(x) - \perm(x_{I_1,I_1}) \cdots \perm(x_{I_r,I_r}),
    \qquad
    \perm(x_{I_1,I_1}) \cdots \perm(x_{I_r,I_r}) - x_{1,1} \cdots x_{n,n},
\end{equation}
which are totally nonnegative because they belong to $\spn_{\mathbb N}\{ x^{e,w}
%x_{1,w_1} \cdots x_{n,w_n} 
\,|\, w \in \sn \}$.
It is natural to ask if the cardinalities of the index sets determine whether a difference of the form (\ref{eq:perprodminusperprod}) is totally nonnegative, but this is not the case.
%cardinalities alone do not answer this question.  
It is natural then to ask how {\em averages} of such products compare to one another.  This problem is open.  (See \cite{BJMajor}, \cite[Prob.\,5.3]{SkanSoskinBJArx}.)
\bp Characterize the pairs of partitions $\lambda = (\lambda_1,\dotsc,\lambda_r)$,
$\mu = (\mu_1,\dotsc,\mu_q)$ such that
\begin{equation}\label{eq:bjperm}
\sumsb{(I_1,\dotsc,I_r)\\|I_k| = \lambda_k}
\frac{\perm(x_{I_1,I_1}) \cdots \perm(x_{I_r,I_r})}{\tbinom{n}{\lambda_1,\dotsc,\lambda_r}}
-
\sumsb{(J_1,\dotsc,J_q)\\|J_k| = \mu_k}
\frac{\perm(x_{J_1,J_1}) \cdots \perm(x_{J_q,J_q})}{\tbinom{n}{\mu_1,\dotsc,\mu_r}}
\end{equation}
is totally nonnegative.
\ep
% as follows.

Now consider generalizing the second inequality in Proposition \ref{p:perminmax} to products of permanents of the form (\ref{eq:permprodgen}). Differences of the form
\begin{equation}\label{eq:nottnn}
\frac{\perm(x_{I_1,I_1}) \cdots \perm(x_{I_r,I_r})}{|I_1|! \cdots |I_r|!} - 
\frac{\perm(x)}{n!}
\end{equation}
are {\em not} totally nonnegative, while differences of the form
\begin{equation}
x_{1,1} \cdots x_{n,n} - \frac{\perm(x_{I_1,I_1}) \cdots \perm(x_{I_r,I_r})}{|I_1|! \cdots |I_r|!}
\end{equation}
are (by Proposition~\ref{p:perminmax}).  It is natural then to ask about the averages of differences (\ref{eq:nottnn}), over all set partitions $(I_1,\dotsc,I_r)$ of a partition $\lambda$.
\bp\label{p:avgminusperm}
Decide if for fixed $\lambda = (\lambda_1,\dotsc,\lambda_r)$, the polynomial
\begin{equation*}
    \sumsb{(I_1,\dotsc,I_r)\\|I_k| = \lambda_k} \perm(x_{I_1,I_1}) \cdots \perm(x_{I_r,I_r})
    -
    \perm(x)
\end{equation*}
is totally nonnegative.
\ep

%Show that for any $n \times n$ totally positive matrix $A$, we have
%$$\frac{\perm(A)}{n!} \leq \frac{\perm(A_{I_1,I_1})\perm(A_{I_2,I_2})...\perm(A_{I_r,I_r})}{ |I_1|! \cdot |I_2|! \cdot...\cdot |I_r|!}$$ 

%\begin{ex}
%($\star$ Extend this to average.)
To illustrate (\ref{eq:nottnn}) and Problem~\ref{p:avgminusperm}, let us consider the case that $n = 3$. It is straightforward to show that
\begin{equation}\label{eq:121233}
\frac{\perm(x_{12,12}) x_{3,3}}{2!1!} - \frac{\perm(x)}{3!},
\end{equation}
equivalently, $3\perm(x_{12,12})x_{3,3} - \perm(x)$, is totally nonnegative because the latter expression equals a sum of matrix minors.
Similarly,
\begin{equation}\label{eq:232311}
\frac{\perm(x_{23,23}) x_{1,1}}{2!1!} - \frac{\perm(x)}{3!},
\end{equation}
is totally nonnegative.
On the other hand,
\begin{equation}\label{eq:131322}
\frac{\perm(x_{13,13}) x_{2,2}}{2!1!} - \frac{\perm(x)}{3!},
\end{equation}
is not, because its evaluation at
%when
%we evaluate $x$ at
\begin{equation*}
 \begin{bmatrix}
        1 & 1 & 0 \\
        1 & 1 & 1 \\
        1 & 1 & 1 
    \end{bmatrix}
\end{equation*}
is negative.
On the other hand, two times
the sum of the three differences (\ref{eq:121233}) -- (\ref{eq:131322}) is
\begin{equation*}
 2x_{1,1}x_{2,2}x_{3,3} - x_{1,2}x_{2,3}x_{3,1} - x_{1,3}x_{2,1}x_{3,2},
 \end{equation*}
 which is totally nonnegative by Theorem~\ref{t:DGS}.

%Let $A$ be a $3 \times 3$ totally positive matrix . Then $$\frac{\text{per}(A)}{3!} \leq \frac{\text{per}(A_{12,12})a_{3,3}}{2!\cdot 1!}$$ 
%Indeed, $$ \text{per}(A) \leq 3a_{11}a_{22}a_{33}+3a_{21}a_{12}a_{33}$$
%$$a_{11}a_{22}a_{33} + a_{11}a_{23}a_{32} + a_{12}a_{23}a_{31} + a_{12}a_{21}a_{33} + 
%a_{13}a_{21}a_{32} + a_{13}a_{22}a_{31} \leq 3a_{11}a_{22}a_{33}+3a_{21}a_{12}a_{33}$$
%We can split all the terms in pairs:
%$$0 \leq (a_{11}a_{22}a_{33}-a_{11}a_{22}a_{33})+(a_{11}a_{22}a_{33}-a_{11}a_{23}a_{32})+(a_{11}a_{22}a_{33}-a_{13}a_{22}a_{31})+$$
%$$+(a_{21}a_{12}a_{33}-a_{12}a_{23}a_{31})+
%(a_{21}a_{12}a_{33}-a_{12}a_{21}a_{33})+
%(a_{21}a_{12}a_{33}-a_{13}a_{22}a_{31})$$
%\end{ex}
%All of the six pairs above are non-negative because of  the theorem \cite[Thm.~2]{drake2004two}.

\bibliography{my}
\end{document}